\documentclass[12pt]{article}

\topmargin -50pt

\marginparwidth 0pt
\oddsidemargin  0pt
 \evensidemargin 0pt
 \marginparsep 0pt
\textwidth 6.5 in
\textheight  9 in
 \hoffset  0.1in

\usepackage[centertags]{amsmath}
\usepackage{amsfonts}
\usepackage{amssymb}
\usepackage{amsthm}
\usepackage[misc]{ifsym}
\usepackage{mathrsfs}
\usepackage{graphicx}
\usepackage{epstopdf}
\usepackage{pgf,tikz}
\usetikzlibrary{arrows}
\usepackage{lineno}
\usepackage{authblk}


\theoremstyle{plain}
\newtheorem{thm}{Theorem}[section]
\newtheorem{cor}[thm]{Corollary}
\newtheorem{lem}[thm]{Lemma}

\theoremstyle{definition}
\newtheorem{defn}[thm]{Definition}
\newtheorem{exam}[thm]{Example}
\newtheorem{rem}[thm]{Remark}
\theoremstyle{remark}
\numberwithin{equation}{section}

\newcommand{\beast}{\begin{eqnarray*}}
\newcommand{\eeast}{\end{eqnarray*}}
\begin{document}

\title{A matrix realization of spectral bounds \\ of the spectral radius of a nonnegative matrix}

\author[*]{Yen-Jen Cheng \Letter}
\author[*]{Chih-wen Weng}
\affil[*]{Department of Applied Mathematics, National Chiao Tung University, 1001 University Road, Hsinchu, Taiwan}

\maketitle

\begin{abstract}
We  realize many sharp spectral bounds of the spectral radius of a nonnegative square matrix $C$ by using the largest real eigenvalues of  suitable matrices of smaller sizes related to $C$ that are very easy to find.
As applications, we give a sharp upper bound of the spectral radius of $C$ expressed by the sum of entries, the largest off-diagonal entry $f$ and the largest diagonal entry $d$ in $C$.
We also give a new class of sharp lower bounds of the spectral radius of $C$ expressed
 by the above $d$ and $f$, the least row-sum $r_n$ and the $t$-th largest row-sum $r_t$ in $C$ satisfying
 $0<r_n-(n-t-1)f-d\leq r_t-(n-t)f$, where $n$ is the size of $C$.
\end{abstract}

{\bf keywords}:  nonnegative matrices, spectral radius, spectral bounds

{\bf MSC2010}:  05C50, 15A42

\bigskip

\section{Introduction}\label{s1}

For real matrices $C=(c_{ij})$, $C'=(c'_{ij})$ of the same size, $C'$ {\it majories} $C$, in the notation  $C\leq C'$, if $c_{ij}\leq c'_{ij}$ for all $i$,$j$. When $C$ is a square matrix, the spectral radius $\rho(C)$ of $C$ is defined by
$$\rho(C):=\max\{~|\lambda|~\ |~\lambda\hbox{ is an eigenvalue of $C$}\},$$
where $|\lambda|$ is the magnitude of complex number $\lambda.$
This paper is motivated by the following theorem of   Xing Duan and Bo Zhou in 2013 \cite[Theorem 2.1]{dz}.

\begin{thm}\label{dz2013}
Let $C=(c_{ij})$ be a nonnegative $n\times n$ matrix with  row-sums $r_1\geq r_2\geq \cdots \geq r_n$,  $f:=\max_{1\leq i\not=j\leq n} c_{ij}$ and $d:=\max_{1\leq i\leq n} c_{ii}$. Then
\begin{equation}\label{mo}\rho(C) \leq \frac{r_{\ell}+d-f+\sqrt{(r_{\ell}-d+f)^{2}+4f\sum_{i=1}^{\ell-1}(r_i-r_\ell)}}{2}\end{equation}
for $1 \leq \ell \leq n.$
Moreover, if $C$ is irreducible, then the equality holds in (\ref{mo}) if and only if $r_1=r_n$ or for  $1\leq t\leq \ell$ with $r_{t-1}\not=r_t=r_\ell$, we have $r_t=r_n$ and
$$c_{ij}=\left\{
           \begin{array}{ll}
             d, & \hbox{if $i=j\leq t-1$;} \\
             f, & \hbox{if $i\not=j$ and $1\leq i\leq n$, $1\leq j\leq t-1.$}
           \end{array}
         \right.$$
\end{thm}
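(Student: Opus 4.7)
The plan is to reformulate~\eqref{mo} as a scalar inequality and prove it via a Collatz--Wielandt argument, matching the paper's announced methodology of realizing spectral bounds by small matrices. A direct calculation shows the right-hand side of~\eqref{mo} is the larger root of
\[
(\lambda - r_\ell)(\lambda - d + f) \;=\; f\sum_{i=1}^{\ell-1}(r_i - r_\ell),
\]
i.e.\ the largest real eigenvalue of the $2 \times 2$ matrix $B = \left(\begin{smallmatrix} d - f & \sum_{i=1}^{\ell-1}(r_i - r_\ell) \\ f & r_\ell \end{smallmatrix}\right)$ (nonnegative when $d \geq f$; otherwise $B$ still has real eigenvalues and the bound is about its larger one). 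So~\eqref{mo} is equivalent to
\[
(\rho(C) - r_\ell)(\rho(C) - d + f) \;\leq\; f\sum_{i=1}^{\ell-1}(r_i - r_\ell). \qquad (*)
\]
Since $\rho(C) \geq c_{ii}$ for every $i$, one has $\rho(C) \geq d \geq d - f$; if additionally $\rho(C) \leq r_\ell$ the left side of $(*)$ is non-positive and the bound is trivial, so I henceforth assume $\rho(C) > r_\ell$.

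Next, pass to a positive Perron eigenvector. Assuming $C$ is irreducible (otherwise perturb to $C + \varepsilon J$ and let $\varepsilon \downarrow 0$), let $y > 0$ satisfy $Cy = \rho y$ with $\rho := \rho(C)$, normalized so that $\max_i y_i = 1$ at some index $i^*$. Two key inequalities follow. First, the eigenvalue equation at $i^*$ combined with $c_{i^*i^*} \leq d$ and $c_{i^*j} \leq f$ for $j \neq i^*$ yields
\[
\rho - d + f \;\leq\; f S, \qquad S := \sum\nolimits_j y_j. \qquad (\mathrm{I})
\]
Second, $y_j \leq 1$ in the eigenvalue equation at each row $i$ gives $\rho y_i \leq r_i$; for $i \geq \ell$ the row-sum ordering $r_i \leq r_\ell$ sharpens this to $\rho y_i \leq r_\ell$. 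Summing over the two blocks $S_1 := \sum_{i \leq \ell - 1} y_i$ and $S_2 := \sum_{i \geq \ell} y_i$ yields
\[
\rho S_1 \;\leq\; (\ell - 1)\, r_\ell + \sum\nolimits_{i=1}^{\ell - 1}(r_i - r_\ell), \qquad \rho S_2 \;\leq\; (n - \ell + 1)\, r_\ell. \qquad (\mathrm{II})
\]

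The combinatorial core is to combine $(\mathrm{I})$ and $(\mathrm{II})$ to deduce $(*)$; I expect this to be the main obstacle. The naive unweighted combination produces only $\rho(\rho - d + f) \leq f n r_\ell + f\sum_{i=1}^{\ell-1}(r_i - r_\ell)$, which is weaker than $(*)$ by exactly $r_\ell[(n-1) f + d - \rho] \geq 0$ (using the standard bound $\rho \leq d + (n-1) f$), so the block sums must be coupled with the appropriate weights rather than simply added. The natural resolution is to apply Collatz--Wielandt directly to $B$: let $(u, v)^T$ be a Perron eigenvector of $B$ (strictly positive in the generic case $d \geq f$; degenerate cases handled by perturbation) with $B\binom{u}{v} = \rho(B)\binom{u}{v}$, and form the piecewise-constant test vector $\hat y \in \mathbb{R}^n$ with $\hat y_i = u$ for $i \leq \ell - 1$ and $\hat y_i = v$ for $i \geq \ell$. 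One then verifies $(C \hat y)_i \leq \rho(B)\,\hat y_i$ for every $i$ by a row-by-row check: for $i \leq \ell - 1$ this uses the bounds $c_{ii} \leq d$, $c_{ij} \leq f$ ($j \neq i$) together with the relation between $u$ and $v$ coming from the eigenvector equation for $B$; for $i \geq \ell$ it uses $r_i \leq r_\ell$ together with the off-diagonal bound $f$. Collatz--Wielandt then yields $\rho(C) \leq \rho(B)$, which is $(*)$.

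For the equality characterization one tracks tightness. Equality in $(\mathrm{I})$ forces $c_{i^* i^*} = d$ and $c_{i^* j} = f$ for every $j \neq i^*$ with $y_j > 0$; equality in $\rho y_i \leq r_i$ forces $y_j = 1$ whenever $c_{ij} > 0$, which by irreducibility propagates $y_j = 1$ through an entire block of indices; equality in $\rho y_i \leq r_\ell$ for $i \geq \ell$ forces $r_i = r_\ell$ in that block. Together these identify the threshold index $t \leq \ell$ with $r_{t-1} \neq r_t = r_\ell = r_n$, and the rigid structure $c_{ii} = d$ for $i \leq t - 1$ and $c_{ij} = f$ for $i \neq j$, $1 \leq j \leq t - 1$, as stated.
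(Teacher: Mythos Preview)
There is a genuine gap in the central step: your matrix $B$ has the right largest eigenvalue, but its eigenvector does \emph{not} yield a valid Collatz--Wielandt test vector. Take $n=3$, $\ell=2$, $d=0$, $f=1$ and
$C=\left(\begin{smallmatrix}0&1&1\\1&0&0\\1&0&0\end{smallmatrix}\right)$,
so $(r_1,r_2,r_3)=(2,1,1)$ and $\rho(C)=\sqrt 2$. Here $B=\left(\begin{smallmatrix}-1&1\\1&1\end{smallmatrix}\right)$ with eigenvector $(u,v)=(1,\,1+\sqrt 2)$ for $\rho(B)=\sqrt 2$, so $\hat y=(1,\,1+\sqrt 2,\,1+\sqrt 2)^T$; but $(C\hat y)_1=2(1+\sqrt 2)\not\le\sqrt 2=\rho(B)\hat y_1$, and swapping the block assignment fails at row~$2$. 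Many $2\times 2$ matrices share the characteristic polynomial $(\lambda-r_\ell)(\lambda-d+f)=f\sum_{i<\ell}(r_i-r_\ell)$; yours is not one whose eigenvector dominates $C$.

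The obstruction is structural. A two-value test vector cannot reach the bound in \eqref{mo} in general: the tightest two-block majorant of $C$ obtainable from $c_{ii}\le d$, $c_{ij}\le f$ and $r_i\le r_\ell$ $(i\ge\ell)$ has quotient
$\left(\begin{smallmatrix}d+(\ell-2)f & r_1-d-(\ell-2)f\\(\ell-1)f & r_\ell-(\ell-1)f\end{smallmatrix}\right)$,
whose spectral radius depends on $r_1$ alone and strictly exceeds the right side of \eqref{mo} whenever $r_1>r_{\ell-1}$. The paper accordingly does not use a $2\times 2$ matrix to supply the test vector. It builds the $\ell\times\ell$ matrix $C''=M_\ell(d,f,r_1,\dots,r_\ell)$ of \eqref{ne1.2} (equivalently the $n\times n$ matrix $C'$ of \eqref{ne1.0}); its \emph{rooted} eigenvector $v'$ has $\ell$ generally distinct entries, and the identity
$(Cv')_i=\sum_{j<n}c_{ij}(v'_j-v'_n)+r_iv'_n\le\sum_{j<n}c'_{ij}(v'_j-v'_n)+r'_iv'_n=(C'v')_i=\rho_r(C'')v'_i$
is exactly the row-by-row check you were seeking. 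The $2\times 2$ matrix in \eqref{n31.3} appears only afterwards, as an equitable quotient of $C''^{T}$ (not of $C''$), and serves solely to compute $\rho_r(C'')$ in closed form---it is not the source of the test vector. Your equality analysis would need to be redone on top of this corrected inequality.
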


Theorem~\ref{dz2013} generalizes the results in \cite{bh:85, cls:13, h:98, hsf:01, lw13, s87, sw:04} and
relates to the  results in \cite{hw:13, Liu, s87}, while the upper bound of $\rho(C)$ expressed in (\ref{mo}) is somewhat complicate and deserves an intuitive realization.

The values on the right hand side of (\ref{mo}) is realized as the largest real eigenvalue $\rho_r(C')$ of the $n\times n$ matrix
\begin{equation}\label{ne1.0}C'=\left(\begin{array}{cccc|ccccc}
d     &f      &\cdots &f      &f      & f        &\cdots &f       &r_1-d-(n-2)f\\
f     &d      &       &f      &f      & f        &\cdots &f       &r_2-d-(n-2)f\\
\vdots&       &\ddots &\vdots &\vdots &\vdots    &       &\vdots  &\vdots      \\
f     &f      &\cdots &d      & f     & f        &\cdots &f       &r_{\ell-1}-d-(n-2)f\\ \hline
f     &f      &\cdots &f      & d     & f        &\cdots &f       &r_{\ell}-d-(n-2)f\\
f     &f      &\cdots &f      & f     & d        &       &f       &r_{\ell}-d-(n-2)f\\
\vdots&\vdots &       &\vdots &\vdots &          &\ddots &\vdots  &\vdots\\
f     &f      &\cdots &f      & f     & f        &       &d       &r_{\ell}-d-(n-2)f\\
f     &f      &\cdots &f      & f     & f        &\cdots &f       &r_{\ell}-(n-1)f\\
\end{array}\right)\end{equation}
which has the following three properties:
\begin{enumerate}
\item[(i)]  The row-sum vector $(r_1, r_2, \ldots, r_\ell, \ldots,  r_\ell)^T$ of $C'$ majories the row-sum vector  $(r_1,$ $r_2,$ $\ldots,$ $r_n)^T$ of $C$,
\item[(ii)] $C'$ majories $C$ except the last column, and
\item[(iii)] $C'$ has a positive eigenvector $(v'_1, v'_2, \ldots, v'_n)^T$ for $\rho_r(C')$ with $v'_i\geq v'_n$ for $1\leq i\leq n.$
\end{enumerate}
Property (iii) will be checked by Lemma~\ref{l4.4}. Since the above matrix $C'$ is not necessary to be nonnegative, the spectra radius $\rho(C')$ of $C'$ is replaced by the largest real eigenvalue $\rho_r(C')$ in the property (iii).
 Our main result in Theorem~\ref{t3.1} is in a more general form that will imply for any matrices $C'$ that satisfy the properties (i)-(iii) above, we have $\rho(C)\leq \rho_r(C').$
Moreover, when the matrix $C'$ and so the value $\rho_r(C')$ are fixed, the matrices $C$ with $\rho(C)=\rho_r(C')$ are completely determined.
We apply Theorem~\ref{t3.1} to find a sharp upper bound of $\rho(C)$ expressed by the sum of entries in $C$, the largest off-diagonal entry $f$ and the largest diagonal entry $d$ in Theorem~\ref{tt6.1}.

 Note that $\rho_r(C')=\rho_r(C'')$ for the largest real eigenvalues of $C'$ and $C''$ respectively, where  $C'$ is as in (\ref{ne1.0}) and
\begin{equation}\label{ne1.2}C''=\left(\begin{array}{cccc|c}
d     &f&\cdots&f     &r_1-d-(\ell-2)f\\
f     &d&      &f     &r_2-d-(\ell-2)f\\
\vdots& &\ddots&\vdots &\vdots \\
f     &f&\cdots&d        &r_{\ell-1}-d-(\ell-2)f\\ \hline
f     &f&\cdots&f     &r_{\ell}-(\ell-1)f
\end{array}\right)
\end{equation}
 is the equitable quotient matrix of $C'$ with respect to the partition $\{\{1\}$, $\{2\}$, $\ldots$, $\{\ell-1\}$, $\{\ell, \ell+1, \ldots, n\}\}$ of $\{1, 2, \ldots, n\}$.
 Moreover  $\rho_r(C'')=\rho_r(C''')$, where
\begin{equation}\label{n31.3}C'''=\begin{pmatrix}(\ell-2)f+d & f\\
\sum_{i=1}^{\ell-1} r_i-(\ell-1)((\ell-2)f+d)    & r_\ell-(\ell-1)f
\end{pmatrix},\end{equation}
is the equitable quotient of the transpose $C''^T$ of $C''$ with respect to the partition $\{\{1, 2, \ldots, \ell-1\}, \{\ell\}\}$ of $\{1, 2, \ldots, \ell\}$.
Motivated by these observations, Theorem~\ref{main3} will provide an upper bound $\rho_r(C'')$ of $\rho(C)$, where $C''$ is a matrix of size smaller than that of $C$ obtained by applying equitable quotient to suitable matrix $C'$  that satisfies properties (i)-(iii) described above.

Every our theorem of upper bounds of $\rho(C)$ has a dual version that deals with lower bounds. We provide a new class of sharp lower bounds of $\rho(C)$  in Theorem~\ref{main5}. Applying  Theorem~\ref{main5} to a binary matrix $C$, we improve the well known inequality  $\rho(C)\geq r_n$ as stated in Corollary~\ref{cor10.2}.
We believe that many new spectral bounds of the spectral radius of a nonnegative matrix will be easily obtained by our matrix realization in this paper.

In addition to the above results, Lemma~\ref{nt3.1}, Lemma~\ref{nt3.1'} and Lemma~\ref{lem2.2'} are of independent interest in matrix theory.

\section{Preliminaries}\label{s1.5}

Our study is based on the famous Perron-Frobenius Theorem, hence we shall review the necessary parts of the theorem in this section.

\begin{thm}[{\cite[Theorem 2.2.1]{Brou}, \cite[Corollary 8.1.29, Theorem 8.3.2]{Horn}}]\label{PF}
If $C$ is a nonnegative square matrix, then the following (i)-(iii) hold.
\begin{enumerate}
\item[(i)] The spectral radius $\rho(C)$ is an eigenvalue of $C$ with a corresponding nonnegative right eigenvector and a corresponding nonnegative left eigenvector.
\item[(ii)] If there exists a column vector $v> 0$ and a nonnegative number $\lambda$ such that $Cv\leq \lambda v$, then $\rho(C)\leq\lambda$.
\item[(iii)] If there exists a column vector $v\geq 0$, $v\not=0$  and a nonnegative number $\lambda$ such that  $Cv\geq \lambda v$, then  $\rho(C)\geq\lambda$.
\end{enumerate}
Moreover, if in addition  $C$ is irreducible, then
 the eigenvalue $\rho(C)$ in (i) has multiplicity $1$ and its corresponding left eigenvector and right eigenvector can be chosen to be positive, and any nonnegative left or right eigenvector of $C$ is only corresponding to the eigenvalue $\rho(C)$.
 \qed
\end{thm}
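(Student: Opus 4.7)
The statement to prove is the Perron–Frobenius theorem as quoted. My plan is to first dispose of the strictly positive case, then pass to the general nonnegative case by perturbation, and finally extract the inequalities (ii) and (iii) from (i) together with standard tricks.

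First I would establish (i) when $C$ is entrywise positive. The most conceptual route is the Collatz--Wielandt characterization: define
\[
r := \sup\{\lambda\geq 0 \mid \exists\, v\geq 0,\ v\neq 0,\ Cv\geq \lambda v\}.
\]
A compactness argument on the simplex $\{v\geq 0 : \mathbf{1}^Tv=1\}$ shows the supremum is attained by some $v^\ast$ with $Cv^\ast \geq r v^\ast$. If the inequality were strict in some coordinate, applying $C$ once more would give $C(Cv^\ast) > rCv^\ast$ (using strict positivity of $C$), contradicting maximality of $r$. Hence $Cv^\ast = rv^\ast$, and $v^\ast = r^{-1}Cv^\ast>0$. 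To see $r=\rho(C)$, take any eigenvalue $\mu$ with eigenvector $w$: the triangle inequality gives $|\mu||w|\leq C|w|$, so $|\mu|\leq r$ by definition, hence $\rho(C)\leq r\leq \rho(C)$.

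For general nonnegative $C$, I would approximate by $C_\varepsilon := C+\varepsilon J$ ($J$ the all-ones matrix), which is strictly positive. Each $C_\varepsilon$ has a positive Perron eigenvector $v_\varepsilon$ normalized to the simplex with eigenvalue $\rho(C_\varepsilon)$. By continuity of eigenvalues and compactness of the simplex, passing $\varepsilon\to 0^+$ along a subsequence yields $v\geq 0$, $v\neq 0$, with $Cv=\rho(C)v$; applying the same argument to $C^T$ yields the nonnegative left eigenvector. For (ii), assuming $v>0$ and $Cv\leq \lambda v$, set $D=\mathrm{diag}(v)$; then the row sums of $D^{-1}CD$ are $(Cv)_i/v_i\leq \lambda$, and since spectral radius is bounded by maximum row sum and is invariant under similarity, $\rho(C)=\rho(D^{-1}CD)\leq \lambda$. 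For (iii), pair $Cv\geq \lambda v$ with a nonnegative left eigenvector $u$ of $C$ for $\rho(C)$ (guaranteed by (i) applied to $C^T$): then $\rho(C)\, u^Tv = u^TCv\geq \lambda u^Tv$. The only subtlety is ensuring $u^Tv>0$; if it is $0$, one can restrict to a suitable irreducible block on the support of $v$, or iterate and use Gelfand's formula $\rho(C)=\lim_k\|C^k\|^{1/k}$ after noting $C^kv\geq \lambda^k v$.

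For the irreducible addendum, the standard trick is $(I+C)^{n-1}>0$: irreducibility means the digraph of $C$ is strongly connected, so every entry of $(I+C)^{n-1}$ is positive. If $v\geq 0$, $v\neq 0$ satisfies $Cv=\rho(C)v$, then $(I+C)^{n-1}v = (1+\rho(C))^{n-1}v$, and the left side is strictly positive, forcing $v>0$. The same applies to left eigenvectors. For simplicity of $\rho(C)$ as an eigenvalue, I would suppose two linearly independent eigenvectors exist and form a real linear combination with a zero coordinate but not identically zero, contradicting positivity of any nonnegative eigenvector; algebraic simplicity then follows from the rank argument on $(\rho(C)I-C)$ combined with the fact that any Jordan block larger than $1\times 1$ would force the left and right eigenvectors to be orthogonal, contradicting $u^Tv>0$. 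Finally, any nonnegative eigenvector must correspond to $\rho(C)$: if $Cw=\mu w$ with $w\geq 0$, $w\neq 0$, pair with the positive left Perron vector $u$ to get $\mu u^Tw=\rho(C)u^Tw$, and $u^Tw>0$ forces $\mu=\rho(C)$.

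The main obstacle is the passage from positive to nonnegative matrices in (i) — one must justify that the perturbed Perron eigenvectors have a convergent subsequence and that the limit is nonzero (this is why the simplex normalization is needed) — and the delicate argument that algebraic multiplicity, not just geometric multiplicity, is one in the irreducible case. The other parts are essentially bookkeeping once the Collatz–Wielandt variational principle and the similarity trick $D^{-1}CD$ are in hand.
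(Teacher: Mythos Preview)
The paper does not actually prove this theorem: it is stated with citations to \cite{Brou} and \cite{Horn} and closed immediately with \qed, so there is no in-paper argument to compare against. Your sketch is the standard textbook route (Collatz--Wielandt for the positive case, perturbation $C+\varepsilon J$ for the nonnegative case, the diagonal similarity $D^{-1}CD$ for (ii), and $(I+C)^{n-1}>0$ for the irreducible refinements), which is essentially what the cited references do; the arguments you outline are correct, including the Gelfand-formula fallback for (iii) and the left/right pairing for algebraic simplicity.

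Two small points worth tightening if you write it out in full. First, in the geometric-multiplicity argument you should say explicitly that, given the positive Perron vector $v$ and a second real eigenvector $w$, you take $t=\min_{i:\,w_i>0} v_i/w_i$ so that $v-tw\geq 0$ has a zero coordinate yet is nonzero; as written, ``a real linear combination with a zero coordinate'' could be negative somewhere, which would not yield the contradiction. Second, in (iii) the pairing argument with a nonnegative left eigenvector $u$ genuinely can fail when $u^Tv=0$ (e.g.\ reducible $C$ with $u$ and $v$ supported on disjoint blocks), so your Gelfand-formula route is not just a backup but the cleanest general argument: from $C^kv\geq\lambda^k v$ and any absolute norm one gets $\|C^k\|\geq\lambda^k$, hence $\rho(C)\geq\lambda$.
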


Without further mention, an eigenvector is always a right eigenvector.
The following two lemmas are  well-known consequences of Theorem~\ref{PF}. We shall provide their proofs since they motivate our  proofs of results.

\begin{lem}[{\cite[Theorem 2.2.1]{Brou}}]\label{lem1.2}
If $0\leq C\leq C'$ are square matrices, then $\rho(C)\leq\rho(C')$. Moreover, if $C'$ is irreducible,
then $\rho(C')=\rho(C)$  if and only if $C'=C$.
\end{lem}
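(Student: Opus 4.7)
The plan is to derive both parts from Theorem~\ref{PF}. For the inequality $\rho(C) \leq \rho(C')$, apply Theorem~\ref{PF}(i) to $C$ to get a nonzero $v \geq 0$ with $Cv = \rho(C) v$. Because $0 \leq C \leq C'$ and $v \geq 0$, we have $C' v \geq Cv = \rho(C) v$, and then Theorem~\ref{PF}(iii) applied to the nonnegative matrix $C'$ yields $\rho(C') \geq \rho(C)$.

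For the equality case, assume $\rho := \rho(C) = \rho(C')$ and $C'$ is irreducible. The strategy is to squeeze equality out of the componentwise inequality $C'v \geq \rho v$ by pairing with a positive left eigenvector. By the moreover part of Theorem~\ref{PF} applied to the irreducible $C'$, choose a positive left eigenvector $u > 0$ with $u^T C' = \rho u^T$. Then $u^T(C'v - \rho v) = 0$; combined with $u > 0$ and $C'v - \rho v \geq 0$, this forces $C'v = \rho v$. Hence $v$ is a nonnegative eigenvector of the irreducible $C'$ for its Perron root, so by the moreover part of Theorem~\ref{PF} it is a scalar multiple of the (strictly positive) Perron eigenvector of $C'$, giving $v > 0$.

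Now I would compute $u^T(C' - C) v = u^T C' v - u^T C v = \rho u^T v - \rho u^T v = 0$. Since $u > 0$, $v > 0$, and every entry of $C' - C$ is nonnegative, each of the $n^2$ nonnegative summands $u_i (C'_{ij} - C_{ij}) v_j$ must vanish, forcing $C'_{ij} = C_{ij}$ for all $i,j$ and hence $C' = C$. The converse implication is immediate.

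The main obstacle is the upgrade from $C' v \geq \rho v$ to $C'v = \rho v$; this is where the irreducibility of $C'$ is essential, as it supplies a strictly positive left eigenvector capable of detecting any coordinate-wise gap in a nonnegative vector. The same duality between a positive left eigenvector of one matrix and a nonnegative right eigenvector of another is the technical backbone that I anticipate will reappear throughout the proofs of Theorem~\ref{t3.1} and its dual Theorem~\ref{main5}, so keeping the argument conceptual here (rather than coordinate-heavy) should pay off later.
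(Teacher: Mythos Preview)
Your proof is correct and follows essentially the same route as the paper: both pair a nonnegative right eigenvector $v$ of $C$ for $\rho(C)$ with a positive left eigenvector of $C'$ for $\rho(C')$, use Theorem~\ref{PF}(iii) for the inequality, and then squeeze equality componentwise from $v'^T(C'-C)v=0$ (the paper writes this as $v'^TC'v=v'^TCv$, deduces $C'v=Cv$, and then concludes $C'=C$ from $v>0$). Your presentation is slightly more explicit in separating the step $C'v=\rho v$ from the step $C'=C$, but the underlying mechanism is identical.
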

\begin{proof}
 Let $v$ be  a nonnegative eigenvector of $C$ for $\rho(C)$. From the assumption,  $C'v\geq Cv=\rho(C)v$.
 By Theorem~\ref{PF}(iii) with $(C, \lambda)=(C', \rho(C))$, we have  $\rho(C')\geq \rho(C).$
   Clearly $C'=C$ implies $\rho(C')=\rho(C)$.
 If $\rho(C')=\rho(C)$ and $C'$ is irreducible, then
 $\rho(C)v'^Tv=\rho(C') v'^Tv=v'^TC'v\geq v'^TCv= \rho(C)v'^Tv,$
 where $v'^T$ is a positive left eigenvector of $C'$ for $\rho(C').$
 Hence the above inequality is the equality $v'^TC'v=v'^TCv$.  As $v'^T$ positive,
  $C'v=Cv=\rho(C)v=\rho(C')v,$ so $v$ is a positive eigenvector of $C'$ for $\rho(C')$ and $C'=C.$
 \end{proof}

The matrix $C'$ in Lemma~\ref{lem1.2} is a {\it matrix realization} of the upper bound $\rho(C')$ of $\rho(C)$. We shall provide other matrix realizations as stated in the title.

\begin{lem}[{\cite[Theorem 8.1.22]{Horn}}]\label{lem1.1}
If an $n\times n$ matrix $C=(c_{ij})$ is nonnegative with row-sum vector $(r_1,r_2,\ldots,r_n)^T$, where $r_i=\sum_{1\leq j\leq n}c_{ij}$ and $r_1\geq r_i\geq r_n$ for $1\leq i\leq n$,
then
$$r_n \leq \rho(C) \leq r_1.$$
Moreover, if $C$ is irreducible, then $\rho(C)=r_1$ (resp.  $\rho(C)=r_n$) if and only if $C$ has constant row-sum.
\end{lem}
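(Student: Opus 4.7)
The plan is to apply Theorem~\ref{PF}(ii)--(iii) with the all-ones vector $\mathbf{1}=(1,1,\ldots,1)^T$, which immediately converts the row-sum inequalities into spectral-radius inequalities. Observe that $C\mathbf{1}=(r_1,r_2,\ldots,r_n)^T$, so $r_n\mathbf{1}\leq C\mathbf{1}\leq r_1\mathbf{1}$. Applying Theorem~\ref{PF}(ii) with $v=\mathbf{1}>0$ and $\lambda=r_1$ yields $\rho(C)\leq r_1$, and applying Theorem~\ref{PF}(iii) with $v=\mathbf{1}\geq 0$, $v\neq 0$ and $\lambda=r_n$ yields $\rho(C)\geq r_n$. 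This settles the two inequalities in one line each.

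For the equality characterization under the irreducibility hypothesis, I would follow exactly the pairing-with-a-positive-left-eigenvector trick used in the proof of Lemma~\ref{lem1.2}. By Perron--Frobenius applied to $C$, there is a positive left eigenvector $u^T>0$ of $C$ for $\rho(C)$. Suppose $\rho(C)=r_1$. Then on one hand $u^T(C\mathbf{1})=\rho(C)(u^T\mathbf{1})=r_1(u^T\mathbf{1})$, while on the other hand $C\mathbf{1}\leq r_1\mathbf{1}$ gives $u^T(C\mathbf{1})\leq r_1(u^T\mathbf{1})$, forcing equality. Since $u^T$ is strictly positive and $r_1\mathbf{1}-C\mathbf{1}$ is entrywise nonnegative, the inner product $u^T(r_1\mathbf{1}-C\mathbf{1})=0$ forces $C\mathbf{1}=r_1\mathbf{1}$, i.e.\ every row-sum equals $r_1$. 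The case $\rho(C)=r_n$ is symmetric: $C\mathbf{1}-r_n\mathbf{1}\geq 0$ together with $u^T(C\mathbf{1}-r_n\mathbf{1})=0$ and $u^T>0$ forces constant row-sum $r_n$.

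The converse direction is trivial: if $C$ has constant row-sum $r$, then $C\mathbf{1}=r\mathbf{1}$ exhibits $r$ as an eigenvalue of $C$ with nonnegative eigenvector $\mathbf{1}$, and $r=r_1=r_n$, so $\rho(C)=r_1=r_n$ by the already-established two-sided bound.

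There is no real obstacle here; the only point that requires a moment's care is that the matrix $C$ in the equality case need not be symmetric, so one must remember to invoke the existence of a \emph{positive} left eigenvector (not merely a nonnegative one) from the irreducible version of Theorem~\ref{PF}, which is exactly what makes the inner product argument force entrywise equality.
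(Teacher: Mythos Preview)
Your proof is correct. The paper does not give a direct proof of Lemma~\ref{lem1.1}; instead it proves the more general Lemma~\ref{lem1.1'}, from which Lemma~\ref{lem1.1} follows by taking $\theta=\rho(C)$ and using Theorem~\ref{PF}(i) (and the irreducible case thereof) to supply the nonnegative or positive left eigenvector.

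There is a slight difference in how the two-sided inequality is obtained. You invoke Theorem~\ref{PF}(ii)--(iii) as black boxes with the test vector $\mathbf{1}$, whereas the paper's route (via Lemma~\ref{lem1.1'}) takes the nonnegative left eigenvector $v^T$ guaranteed by Theorem~\ref{PF}(i), normalizes it, and observes that $\rho(C)=v^TC\mathbf{1}=\sum_i v_i r_i$ is a convex combination of the row-sums, hence between $r_n$ and $r_1$. Your equality argument---pairing $C\mathbf{1}$ against a positive left eigenvector and forcing componentwise equality from a vanishing nonnegative inner product---is exactly the unpacking of that same convex-combination observation (a convex combination with all weights positive equals an extreme value only if all terms equal that value). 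So the two arguments are really the same idea viewed from opposite sides: you push $\mathbf{1}$ through $C$ and test against $v^T$, while the paper pushes $v^T$ through $C$ and tests against $\mathbf{1}$.
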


We provide a proof of the following generalized version of Lemma~\ref{lem1.1}, which is due to  M. N. Ellingham and Xiaoya Zha \cite{ez}.

\begin{lem}[\cite{ez}]\label{lem1.1'}
If an $n\times n$ matrix $C$ with row-sum vector $(r_1,r_2,\ldots,r_n)^T$, where $r_1\geq r_i\geq r_n$ for $1\leq i\leq n$, has a nonnegative left eigenvector $v^T=(v_1,v_2,\ldots,v_n)$ for $\theta$, then
$$r_n \leq \theta \leq r_1.$$
Moreover, $\theta=r_1$ (resp. $\theta=r_n$) if and only if $r_i=r_1$ (resp. $r_i=r_n$) for the indices $i$ with $v_i\ne 0$.
In particular, if $v^T$ is positive, $\theta=r_1$ (resp. $\theta=r_n$) if and only if $C$ has constant row-sum.
\end{lem}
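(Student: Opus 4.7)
The plan is to extract a single scalar identity by right-multiplying the left-eigenvector equation $v^T C = \theta v^T$ by the all-ones column vector $\mathbf{1}=(1,1,\ldots,1)^T$. Since $C\mathbf{1}$ is by definition the row-sum vector $(r_1,r_2,\ldots,r_n)^T$, this yields
\begin{equation*}
\theta\sum_{i=1}^n v_i \;=\; v^T C \mathbf{1} \;=\; \sum_{i=1}^n v_i r_i.
\end{equation*}
First I would observe that as $v^T$ is a nonnegative eigenvector it is in particular nonzero, so $\sum_i v_i > 0$, and the identity above exhibits $\theta$ as a weighted average of $r_1,\ldots,r_n$ with nonnegative weights $v_1,\ldots,v_n$. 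The two-sided bound $r_n\le\theta\le r_1$ then follows immediately from $r_n\le r_i\le r_1$.

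Next I would handle the equality cases. Rewriting the identity as
\begin{equation*}
\sum_{i=1}^n v_i\,(r_1-r_i)\;=\;(r_1-\theta)\sum_{i=1}^n v_i,
\end{equation*}
the right-hand side vanishes precisely when $\theta=r_1$, and, since every summand on the left is nonnegative, this forces $v_i(r_1-r_i)=0$ for every $i$, i.e.\ $r_i=r_1$ whenever $v_i\ne 0$. The argument for $\theta=r_n$ is symmetric, using $\sum_i v_i(r_i-r_n)=(\theta-r_n)\sum_i v_i$. The ``in particular'' clause is then just the special case in which the condition ``$v_i\ne 0$'' is vacuous, forcing $r_1=\cdots=r_n$ and hence constant row-sum.

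There is no real obstacle; the only subtlety is that $C$ is not assumed nonnegative, so I cannot invoke Perron--Frobenius directly (as was done in the proof of Lemma~\ref{lem1.1}). The point of the argument above is precisely that it avoids Perron--Frobenius and uses only the left-eigenvector identity and the nonnegativity of $v$, which is exactly what Ellingham and Zha exploit to get a statement valid for arbitrary real (or even complex-entry) matrices admitting a nonnegative left eigenvector.
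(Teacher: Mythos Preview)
Your proof is correct and is essentially the same as the paper's: both right-multiply the left-eigenvector relation by the all-ones vector to express $\theta$ as a nonnegative weighted average (equivalently, after normalizing, a convex combination) of the row-sums $r_i$, from which the bounds and equality characterizations follow immediately. The only cosmetic difference is that the paper normalizes $\sum_i v_i=1$ at the outset and leaves the equality analysis implicit in the phrase ``and the lemma follows,'' whereas you spell it out via $\sum_i v_i(r_1-r_i)=(r_1-\theta)\sum_i v_i$.
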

\begin{proof}
Without loss of generality, let $\sum_{i=1}^{n}v_i=1$ and $u$ be the all-one column vector. Then
$$\theta=\theta v^Tu=v^TCu=\sum_{i=1}^nv_ir_i.$$
So $\theta$ is a convex combination of those $r_i$ with indices $i$ satisfying $1\leq i\leq n$ and $v_i> 0$, and the lemma follows.
\end{proof}

In the sequels, we shall call two statements  that resemble each other by switching $\leq$ and $\geq$ and corresponding variables, like
$\theta\geq r_n$ and $\theta\leq r_1$, as {\it dual statements}, and  their proofs are called {\it dual proofs} if one proof is obtained from the other  by simply switching one of $\leq$ and $\geq$ to the other.

\section{A generalization of Lemma~~\ref{lem1.2}}\label{s2.2}

We generalize Lemma~\ref{lem1.2} in the sense of Lemma~\ref{lem1.1'} that the matrices considered are not necessary to be nonnegative.
\begin{lem}\label{nt3.1}
 Let $C=(c_{ij})$, $C'=(c'_{ij})$, $P$ and $Q$ be  $n\times n$ matrices.
Assume that
\begin{enumerate}
\item[(i)]    $PCQ\leq PC'Q$;
\item[(ii)]  $C'$ has an eigenvector $Qu$ for $\lambda'$ for some nonnegative column vector $u=(u_1, u_2, \ldots, u_n)^T$  and $\lambda'\in \mathbb{R}$;
\item[(iii)] $C$ has a left eigenvector $v^TP$ for $\lambda$ for some nonnegative row vector $v^T=(v_1, v_2, \ldots, v_n)$  and  $\lambda\in \mathbb{R}$; and
\item[(iv)] $v^TPQu>0.$
\end{enumerate}
 Then $\lambda\leq \lambda'$.
Moreover, $\lambda=\lambda'$
if and only if
\begin{equation}\label{ne4.1}
(PC'Q)_{ij}=(PCQ)_{ij}\qquad \hbox{for~}1\leq i, j\leq n \hbox{~with~} v_i\ne 0 \hbox{~and~} u_j\ne 0.
\end{equation}
\end{lem}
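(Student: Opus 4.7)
The plan is to evaluate the scalar $v^{T}(PCQ)u$ against $v^{T}(PC'Q)u$ and read off both the inequality and the equality condition from a single termwise comparison.

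First I would exploit the two eigen-relations. From hypothesis (iii), $v^{T}PC=\lambda v^{T}P$, so multiplying on the right by $Qu$ gives
\[
v^{T}(PCQ)u = v^{T}(PC)Qu = \lambda\,v^{T}PQu.
\]
From hypothesis (ii), $C'Qu=\lambda'Qu$, so multiplying on the left by $v^{T}P$ gives
\[
v^{T}(PC'Q)u = v^{T}P(C'Qu) = \lambda'\,v^{T}PQu.
\]
Thus the problem reduces to comparing the two scalars $v^{T}(PCQ)u$ and $v^{T}(PC'Q)u$, divided by the positive quantity $v^{T}PQu$ from hypothesis (iv).

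Next I would expand these scalars entrywise. Because $v\geq 0$ and $u\geq 0$ and $(PCQ)_{ij}\leq(PC'Q)_{ij}$ by hypothesis (i), each summand of
\[
v^{T}(PC'Q)u - v^{T}(PCQ)u = \sum_{i,j} v_i\bigl((PC'Q)_{ij}-(PCQ)_{ij}\bigr)u_j
\]
is nonnegative. Hence $v^{T}(PCQ)u\leq v^{T}(PC'Q)u$, and combining with the two eigen-identities above yields $\lambda\,v^{T}PQu\leq \lambda'\,v^{T}PQu$. Dividing by $v^{T}PQu>0$ gives $\lambda\leq\lambda'$.

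For the moreover part, equality $\lambda=\lambda'$ forces the above nonnegative sum to vanish, and since each summand $v_i\bigl((PC'Q)_{ij}-(PCQ)_{ij}\bigr)u_j$ is individually $\geq 0$, each must vanish; this is exactly the condition (\ref{ne4.1}) that whenever $v_i\ne 0$ and $u_j\ne 0$ one has $(PC'Q)_{ij}=(PCQ)_{ij}$. Conversely, if (\ref{ne4.1}) holds, then only terms with $v_i=0$ or $u_j=0$ remain in the sum, and these contribute nothing, giving $v^{T}(PCQ)u=v^{T}(PC'Q)u$ and hence $\lambda=\lambda'$. There is no serious obstacle here; the only thing to get right is that the positivity in (iv) is what lets us cancel the common factor and that the entrywise nonnegativity of the summands, rather than merely the sum, is what delivers the sharp equality characterization.
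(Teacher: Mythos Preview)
Your proof is correct and follows essentially the same approach as the paper: compute the scalar $v^T(PCQ)u$ and $v^T(PC'Q)u$ via the eigen-relations, compare them using the entrywise inequality (i) together with the nonnegativity of $u$ and $v$, and divide out the positive factor $v^TPQu$; the equality analysis via vanishing of each nonnegative summand is likewise the same.
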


\begin{proof}
Multiplying the nonnegative vector $u$ in (ii) to the right of both terms of  (i),
\begin{equation}\label{e4.3}
PCQu\leq PC'Qu=\lambda'PQu.
\end{equation}
Multiplying the nonnegative left eigenvector $v^T$ of $C$ for $\lambda$ in assumption (iii) to the left of all terms  in (\ref{e4.3}), we have
\begin{equation}\label{e4.4}\lambda v^TPQu=v^TPCQu\leq v^TPC'Qu=\lambda' v^TPQu.\end{equation}
Now delete the positive term $v^TPQu$ by assumption (iv) to obtain $\lambda\leq \lambda'$ and finish the proof of the first part.

 Assume that $\lambda=\lambda'$, so the inequality in (\ref{e4.4}) is an equality.  Especially $(PCQu)_i=(PC'Qu)_i$
for any $i$ with $v_i\not=0.$ Hence $(PCQ)_{ij}=(PC'Q)_{ij}$ for  any $i$ with $v_i\not=0$ and any $j$ with $u_j\not=0.$

Conversely, (\ref{ne4.1}) implies $$v^TPCQu=\sum_{i,j} v_i(PCQ)_{ij}u_j=\sum_{i,j} v_i(PC'Q)_{ij}u_j=v^TPC'Qu,$$ so
$\lambda=\lambda'$ by (\ref{e4.4}).
\end{proof}

If $C$ is nonnegative and $P=Q=I$, where $I$ is  the $n\times n$ identity matrix, then Lemma~\ref{nt3.1} becomes Lemma~\ref{lem1.2}
with an additional assumption $v^Tu>0$ which immediately holds if $C$ or $C'$ is irreducible by Theorem~\ref{PF}.
The following is a dual version of lemma~\ref{nt3.1} and its proof is by dual proof.

\begin{lem}\label{nt3.1'}
 Let $C=(c_{ij})$, $C'=(c'_{ij})$, $P$ and $Q$ be  $n\times n$ matrices.
Assume that
\begin{enumerate}
\item[(i)]    $PCQ\geq PC'Q$;
\item[(ii)]  $C'$ has an eigenvector $Qu$ for $\lambda'$ for some nonnegative column vector $u=(u_1, u_2, \ldots, u_n)^T$  and $\lambda'\in \mathbb{R}$;
\item[(iii)] $C$ has a left eigenvector $v^TP$ for $\lambda$ for some nonnegative row vector $v^T=(v_1, v_2, \ldots, v_n)$  and  $\lambda\in \mathbb{R}$; and
\item[(iv)] $v^TPQu>0.$
\end{enumerate}
 Then $\lambda\geq \lambda'$.
Moreover, $\lambda=\lambda'$
if and only if
\begin{equation}\label{ne4.1'}
(PC'Q)_{ij}=(PCQ)_{ij}\qquad \hbox{for~}1\leq i, j\leq n~ \hbox{~with~} v_i\ne 0 \hbox{~and~} u_j\ne 0.
\end{equation}\qed
\end{lem}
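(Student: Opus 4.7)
The plan is to carry out the exact dual of the proof of Lemma~\ref{nt3.1}, simply reversing the direction of the inequality in each step. All the structural ingredients (multiply on the right by the nonnegative column vector $u$, multiply on the left by the nonnegative row vector $v^T$, then use positivity of $v^T P Q u$ to cancel it) transfer without change because the hypotheses (ii), (iii), (iv) are identical and only (i) is flipped.

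More concretely, I would first apply (i), now in the form $PCQ \geq PC'Q$, on the right by the nonnegative vector $u$ from (ii), obtaining
\begin{equation*}
PCQu \;\geq\; PC'Qu \;=\; \lambda' PQu,
\end{equation*}
where the equality uses $C'(Qu)=\lambda'(Qu)$. Then I would multiply on the left by the nonnegative row vector $v^T$ from (iii), using $v^T P C = \lambda v^T P$, to obtain
\begin{equation*}
\lambda\, v^T P Q u \;=\; v^T P C Q u \;\geq\; v^T P C' Q u \;=\; \lambda'\, v^T P Q u.
\end{equation*}
By hypothesis (iv), $v^T P Q u > 0$, so cancelling gives $\lambda \geq \lambda'$, which proves the first assertion.

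For the equality characterization, if $\lambda=\lambda'$ the displayed chain collapses to an equality, so $(PCQu)_i = (PC'Qu)_i$ for every index $i$ with $v_i \ne 0$. Writing this coordinate-wise as $\sum_j \bigl((PCQ)_{ij}-(PC'Q)_{ij}\bigr)u_j = 0$ and noting that each summand is nonnegative (by (i)) and $u_j\geq 0$, each term must vanish, which forces $(PCQ)_{ij} = (PC'Q)_{ij}$ for every $j$ with $u_j\ne 0$. Conversely, if (\ref{ne4.1'}) holds, then expanding
\begin{equation*}
v^T P C Q u \;=\; \sum_{i,j} v_i\, (PCQ)_{ij}\, u_j \;=\; \sum_{i,j} v_i\, (PC'Q)_{ij}\, u_j \;=\; v^T P C' Q u,
\end{equation*}
so the inequality in the chain becomes an equality and $\lambda=\lambda'$ follows again from cancelling $v^T P Q u>0$.

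I do not expect any serious obstacle: the main subtle point, present already in Lemma~\ref{nt3.1}, is the passage from equality of the vectors $PCQu$ and $PC'Qu$ on the support of $v$ to equality of the matrix entries $(PCQ)_{ij}$ and $(PC'Q)_{ij}$ on the joint support of $v$ and $u$. This uses crucially the entrywise inequality (i) together with nonnegativity of $u$, so that a vanishing sum of nonnegative terms forces each term to vanish; the same argument works verbatim here with (i) reversed, so no new idea is needed.
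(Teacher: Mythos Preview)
Your proposal is correct and is precisely the dual proof the paper has in mind: the paper does not even write out a proof of Lemma~\ref{nt3.1'}, merely noting that it follows from Lemma~\ref{nt3.1} by switching $\leq$ and $\geq$ throughout, which is exactly what you do. Your treatment of the equality case is in fact slightly more explicit than the paper's own argument for Lemma~\ref{nt3.1}, but the reasoning is identical.
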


\section{The special case $P=I$ and a particular $Q$}\label{s2.5}

We shall applying Lemma~\ref{nt3.1} and Lemma~\ref{nt3.1'} by using $P=I$ and
\begin{equation}\label{defnq}
Q=\begin{pmatrix}
1 & 0 & \cdots & 0 & 1 \\
0 & 1 & \cdots & 0 & 1 \\
\vdots & \vdots & \ddots & \vdots & \vdots \\
0 & 0 & \cdots & 1 & 1 \\
0 & 0 & \cdots & 0 & 1 \\
\end{pmatrix}.
\end{equation}
Hence for $n\times n$ matrix $C'=(c'_{ij}),$ the matrix $PC'Q$ in Lemma~\ref{nt3.1}(i) is
\begin{equation}\label{C'Q}C'Q=\begin{pmatrix}
c'_{11} & c'_{12} & \cdots & c'_{1\ n-1} & r'_1\\
c'_{21} & c'_{22} & \cdots & c'_{2\ n-1} & r'_2\\
\vdots & \vdots & \ddots & \vdots & \vdots \\
c'_{n-1\ 1} & c'_{n-1\ 2} & \cdots & c'_{n-1\ n-1} & r'_{n-1}\\
c'_{n1} & c'_{n2} & \cdots & c'_{n\ n-1} & r'_n\\
\end{pmatrix},
\end{equation}
where $(r'_1, r'_2, \ldots, r'_n)^T$ is the row-sum column vector of $C'.$

\begin{defn}\label{d3.1}
 A column vector $v'=(v'_1,v'_2,\ldots,v'_n)^T$ is called {\it rooted}  if $v'_j\geq v'_n\geq 0$  for $1\leq j\leq n-1$.
\end{defn}

The following Lemma is immediate from the above definition.
\begin{lem}\label{l3.15}
If $u=(u_1, u_2, \ldots, u_n)^T$ and $v'=(v'_1, v'_2, \ldots, v'_n):=Qu=(u_1+u_n, u_2+u_n, \ldots, u_{n-1}+u_n, u_n)^T$, then
\begin{enumerate}
\item[(i)] $v'$ is rooted  if and only if  $u$ is nonnegative;
\item[(ii)] $u_j>0$ if and only if $v'_j>v'_n$ for $1\leq j\leq n-1$.
\end{enumerate}\qed
\end{lem}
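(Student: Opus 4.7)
The plan is to prove both parts directly from the explicit form of $v'=Qu$ given in the statement, namely $v'_j = u_j + u_n$ for $1\le j\le n-1$ and $v'_n = u_n$. Since the definition of \textbf{rooted} says $v'_j \ge v'_n \ge 0$ for $1\le j\le n-1$, the two conditions to check translate component-wise into inequalities involving only $u_j$ and $u_n$, so no real computation is required.

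For part (i), I would observe that $v'_j \ge v'_n$ for $1\le j\le n-1$ is equivalent to $u_j + u_n \ge u_n$, i.e.\ $u_j \ge 0$, while $v'_n \ge 0$ is exactly $u_n \ge 0$. Conjoining these equivalences over $j$ gives $v'$ rooted iff $u_j \ge 0$ for all $j$, i.e.\ $u$ nonnegative. For part (ii), the same rewriting gives $v'_j > v'_n \iff u_j + u_n > u_n \iff u_j > 0$ for each $1\le j\le n-1$.

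There is no genuine obstacle; the only care needed is to write the two equivalences cleanly and to confirm that the $n$-th coordinate contributes the condition $v'_n\ge 0 \iff u_n \ge 0$ (needed for the ``rooted'' half of (i), though not for (ii), which excludes $j=n$). The whole proof will be three or four lines and can be presented as a pair of short biconditional chains, after which one concludes with \texttt{\textbackslash qed} matching the style of the neighbouring lemmas.
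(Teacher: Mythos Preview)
Your proposal is correct and matches the paper's treatment: the paper states the lemma as ``immediate from the above definition'' and gives no proof beyond the \qed, so your direct component-wise verification is exactly the intended (and only reasonable) argument.
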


The following matrix notation will be adopted in the paper. For a matrix $C=(c_{ij})$ and subsets $\alpha$, $\beta$ of row indices and column indices respectively, we use $C[\alpha|\beta]$ to denote the submatrix
of $C$ with size $|\alpha|\times |\beta|$ that has entries $c_{ij}$ for $i\in \alpha$ and $j\in\beta$,
$C[\alpha|\beta):=C[\alpha|\overline{\beta}],$ where $\overline{\beta}$ is the complement of $\beta$ in the set of column indices, and
similarly, for the definitions of $C(\alpha|\beta]$ and $C(\alpha|\beta).$
 For $\ell\in \mathbb{N},$ $[\ell]:=\{1, 2, \ldots, \ell\},$ symbol $-$ is the complete set of indices, and we use  $i$ to denote the singleton subset $\{i\}$ to reduce the double use of parentheses. For example of the $n\times n$ matrix $C$,
 $C[-|n)=C[[n]|[n-1]]$ is the $n\times (n-1)$ submatrix of $C$ obtained by deleting the last column of $C$.
The following theorem is immediate from Lemma~\ref{nt3.1} by applying $P=I$, the $Q$ in (\ref{defnq}), $v'=Qu$ and referring to (\ref{C'Q}) and Lemma~\ref{l3.15}.

\begin{thm}\label{t3.1}
 Let $C=(c_{ij})$, $C'=(c_{ij})$ be  $n\times n$ matrices.
Assume that
\begin{enumerate}
\item[(i)]   $C[-|n)\leq C'[-|n)$ and the row-sum vector $(r'_1, r'_2, \ldots, r'_n)^T$ of $C'$ majories the row-sum vector  $(r_1, r_2, \ldots, r_n)^T$ of $C$;
\item[(ii)]  $C'$ has a rooted eigenvector $v'=(v'_1, v'_2, \ldots, v'_n)^T$ for $\lambda'$ for some  $\lambda'\in \mathbb{R}$;
\item[(iii)] $C$ has a nonnegative left eigenvector $v^T=(v_1, v_2, \ldots, v_n)$ for $\lambda\in \mathbb{R}$;
\item[(iv)] $v^Tv'>0.$
\end{enumerate}
 Then $\lambda\leq \lambda'$.
Moreover, $\lambda=\lambda'$
if and only if
\begin{enumerate}
\item[(a)] $r_i=r'_i$\qquad for $1\leq i\leq n$ with $v_i\not=0$ when $v'_n\not=0;$
\item[(b)]
$c'_{ij}=c_{ij}\qquad \hbox{for~}1\leq i\leq n,~1\leq j\leq n-1 \hbox{~with~} v_i\ne 0 \hbox{~and~} v'_j> v'_n.$
\end{enumerate} \qed
\end{thm}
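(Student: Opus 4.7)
The plan is to obtain Theorem~\ref{t3.1} as an immediate specialization of Lemma~\ref{nt3.1}, with the substitutions $P=I$ and $Q$ as in (\ref{defnq}). The key structural fact is that right-multiplication by this $Q$ replaces the $n$-th column of an $n\times n$ matrix by its row-sum vector while preserving the first $n-1$ columns; this is recorded explicitly in (\ref{C'Q}). Consequently, the single matrix inequality $PCQ\leq PC'Q$ required by Lemma~\ref{nt3.1}(i) unfolds into the pair $C[-|n)\leq C'[-|n)$ together with $(r_1,\ldots,r_n)^T\leq (r'_1,\ldots,r'_n)^T$, which is exactly condition (i) of Theorem~\ref{t3.1}.

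For the eigenvector hypothesis, I would introduce $u$ by setting $v'=Qu$ and invoke Lemma~\ref{l3.15}(i), which tells us that $v'$ is rooted if and only if $u$ is nonnegative. Thus the assumption that $C'$ has a rooted eigenvector $v'$ for $\lambda'$ translates verbatim into the existence of a nonnegative $u$ with $C'(Qu)=\lambda'(Qu)$, matching Lemma~\ref{nt3.1}(ii). Conditions (iii) and (iv) of Theorem~\ref{t3.1} are identical to those of Lemma~\ref{nt3.1} after noting $v^TPQu=v^Tv'$. With all four hypotheses in place, Lemma~\ref{nt3.1} produces $\lambda\leq\lambda'$ at once.

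The equality characterization requires translating Lemma~\ref{nt3.1}'s equality condition $(PC'Q)_{ij}=(PCQ)_{ij}$ for those $i,j$ with $v_i\neq 0$ and $u_j\neq 0$, splitting on whether $j\leq n-1$ or $j=n$. For $j\leq n-1$, the $(i,j)$ entries of $CQ$ and $C'Q$ coincide with $c_{ij}$ and $c'_{ij}$ respectively by (\ref{C'Q}), and Lemma~\ref{l3.15}(ii) identifies the condition $u_j>0$ with $v'_j>v'_n$; this produces clause (b) of the theorem. For $j=n$, those same entries are the row-sums $r_i$ and $r'_i$, while $u_n\neq 0$ is equivalent to $v'_n\neq 0$ because $v'_n=u_n$ from the definition of $Q$; this produces clause (a).

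I do not expect a genuine obstacle: the whole argument is a careful bookkeeping exercise that pulls Lemma~\ref{nt3.1} back through the particular $Q$. The one place demanding attention is the split between the two regimes $j\leq n-1$ and $j=n$ in reading off the equality conditions, and both cases are dispatched cleanly by Lemma~\ref{l3.15} together with (\ref{C'Q}).
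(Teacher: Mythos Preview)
Your proposal is correct and follows exactly the paper's own approach: the paper states that Theorem~\ref{t3.1} is immediate from Lemma~\ref{nt3.1} by taking $P=I$, the $Q$ of (\ref{defnq}), setting $v'=Qu$, and referring to (\ref{C'Q}) and Lemma~\ref{l3.15}, with the equality clauses (a)--(b) obtained from (\ref{ne4.1}). Your bookkeeping of the split $j\leq n-1$ versus $j=n$ via Lemma~\ref{l3.15}(ii) and $v'_n=u_n$ is precisely the intended translation.
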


Note that the cases (a)-(b) in Theorem~\ref{t3.1} is from the line (\ref{ne4.1}) in Theorem~\ref{nt3.1}.
The first part of assumption (i) in Theorem~\ref{t3.1}  says that last column is {\it irrelevant} in the comparison of $C$ and $C'$. The following theorem is a dual version of Theorem~\ref{t3.1}.

\begin{thm}\label{t3.1'}
 Let $C=(c_{ij})$, $C'=(c_{ij})$ be  $n\times n$ matrices.
Assume that
\begin{enumerate}
\item[(i)]   $C[-|n)\geq C'[-|n)$ and the row-sum vector $(r_1, r_2, \ldots, r_n)^T$  of $C$ majories the row-sum vector $(r'_1, r'_2, \ldots, r'_n)^T$ of $C'$;
\item[(ii)]  $C'$ has a rooted eigenvector $v'=(v'_1, v'_2, \ldots, v'_n)^T$ for $\lambda'$ for some  $\lambda'\in \mathbb{R}$;
\item[(iii)] $C$ has a nonnegative left eigenvector $v^T=(v_1, v_2, \ldots, v_n)$ for $\lambda\in \mathbb{R}$;
\item[(iv)] $v^Tv'>0.$
\end{enumerate}
 Then $\lambda\geq \lambda'$.
Moreover, $\lambda=\lambda'$
if and only if
\begin{enumerate}
\item[(a)] $r_i=r'_i$\qquad for $1\leq i\leq n$ with $v_i\not=0$ when $v'_n\not=0;$
\item[(b)]
$c'_{ij}=c_{ij}\qquad \hbox{for~}1\leq i\leq n,~1\leq j\leq n-1 \hbox{~with~} v_i\ne 0 \hbox{~and~} v'_j> v'_n.$
\end{enumerate}  \qed
\end{thm}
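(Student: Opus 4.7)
The plan is to apply Lemma~\ref{nt3.1'} with $P=I$ and $Q$ as defined in (\ref{defnq}), exactly mirroring the reduction of Theorem~\ref{t3.1} from Lemma~\ref{nt3.1} but with all inequalities reversed. The key computational fact is that $C'Q$ has the explicit form (\ref{C'Q}): its first $n-1$ columns agree with $C'[-|n)$, while its last column is the row-sum vector of $C'$, and likewise for $CQ$.

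To set up the application, I would let $u := (v'_1 - v'_n, \ldots, v'_{n-1} - v'_n, v'_n)^T$, so that $v' = Qu$. Since $v'$ is rooted by hypothesis (ii), Lemma~\ref{l3.15}(i) gives $u \ge 0$, and Lemma~\ref{l3.15}(ii) gives $u_j > 0 \Leftrightarrow v'_j > v'_n$ for $1 \le j \le n-1$. The four hypotheses of Lemma~\ref{nt3.1'} then translate as follows: (i) $CQ \ge C'Q$ is equivalent, via the explicit form of $C'Q$ and $CQ$, to the condition $C[-|n) \ge C'[-|n)$ together with the row-sum majorization of $(r_i)$ over $(r'_i)$; (ii) $C'(Qu) = C'v' = \lambda'v' = \lambda'Qu$ exhibits the required eigenvector; (iii) $v^TP = v^T$ is the required nonnegative left eigenvector of $C$; and (iv) $v^TPQu = v^Tv' > 0$ is given. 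Lemma~\ref{nt3.1'} then delivers $\lambda \ge \lambda'$.

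For the equality characterization, I would translate the single line (\ref{ne4.1'}) into clauses (a) and (b) by splitting on the column index $j$. For $j = n$, the condition $u_n \ne 0$ reads $v'_n \ne 0$, and the entry equality $(CQ)_{in} = (C'Q)_{in}$ reads $r_i = r'_i$, yielding clause (a). For $1 \le j \le n-1$, the condition $u_j \ne 0$ reads $v'_j > v'_n$ by Lemma~\ref{l3.15}(ii), and the entry equality $(CQ)_{ij} = (C'Q)_{ij}$ reads $c_{ij} = c'_{ij}$, yielding clause (b). Both directions of the biconditional come from this same correspondence, since (\ref{ne4.1'}) is itself a biconditional in Lemma~\ref{nt3.1'}.

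The main obstacle is really just the bookkeeping in this last translation, in particular the role of the qualifier ``when $v'_n \ne 0$'' in clause (a): this qualifier arises because the $j = n$ case of (\ref{ne4.1'}) is vacuous when $u_n = v'_n = 0$, so no information about row sums is extracted in that degenerate case. Beyond this careful bookkeeping, the argument is a routine parallel of the reduction of Theorem~\ref{t3.1} from Lemma~\ref{nt3.1}, so I anticipate no substantive difficulty.
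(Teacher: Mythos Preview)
Your proposal is correct and follows exactly the paper's approach: Theorem~\ref{t3.1'} is stated as the dual of Theorem~\ref{t3.1}, whose proof is ``immediate from Lemma~\ref{nt3.1} by applying $P=I$, the $Q$ in (\ref{defnq}), $v'=Qu$ and referring to (\ref{C'Q}) and Lemma~\ref{l3.15},'' and you carry out precisely this specialization of Lemma~\ref{nt3.1'} with the same choices and the same translation of the equality condition via Lemma~\ref{l3.15}.
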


\begin{exam}\label{exam2}
Consider the following three matrices
$$C'_\ell= \begin{pmatrix}
3 & 1 & 1\\ 0 & 0 & 3 \\ 0 & 1 & 2
\end{pmatrix}, \quad  C=\begin{pmatrix}
3 & 1 & 1\\ 1 & 0 & 2 \\ 1 & 1 & 1
\end{pmatrix}, \quad C'_u=\begin{pmatrix}
3 & 2 & 0\\ 1 & 2 & 0 \\ 1 & 2 & 0
\end{pmatrix}$$
with $C'_\ell[-|3)\leq C[-|3) \leq C'_u[-|3),$ and the same row-sum vector $(5, 3, 3)^T$.
Note that $C'_\ell$ has a rooted eigenvector $v'^\ell=(1, 0, 0)^T$ for $\lambda'^\ell=3$
and $C'_u$ has a rooted eigenvector $v'^u=(2, 1, 1)^T$ for $\lambda'^r=4$.
Since $C$ is irreducible, it has a left positive eigenvector $(v_1,v_2,v_3)>0$.
Hence assumptions (i)-(iv) in Theorem~\ref{t3.1} and Theorem~\ref{t3.1'} hold, and
we conclude that $\lambda'^\ell\leq \rho(C)\leq \lambda'^r$.
Since $[3]\times [1]$ is the set of the pairs $(i,j)$ described in Theorem~\ref{t3.1}(b) and Theorem~\ref{t3.1'}(b),
from simple comparison of
the first columns $C'_\ell[-|~1]< C[-|~1] = C'_u[-|~1]$ of these three matrices,
we easily conclude that $3=\lambda'^\ell< \rho(C)= \lambda'^r=4$ by the second part of Theorem~\ref{t3.1} and that of Theorem~\ref{t3.1'}.
\end{exam}

\section{Matrices with a rooted eigenvector}\label{ns4}

Before giving applications of Theorem~\ref{t3.1} and Theorem \ref{t3.1'}, we need to construct $C'$ which possesses a rooted eigenvector for some $\lambda'$. The following lemma comes immediately.

\begin{lem}\label{l4.1}
If a square matrix $C'$ has a rooted eigenvector for $\lambda'$, then $C'+dI$ also has
the same rooted eigenvector for $\lambda'+d,$ where $d$ is a constant and $I$ is the identity matrix with the same size of $C'$. \qed
\end{lem}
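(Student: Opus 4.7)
The plan is to verify the eigenvalue equation directly. Let $v'=(v'_1,v'_2,\ldots,v'_n)^T$ be the given rooted eigenvector of $C'$ for $\lambda'$, so that by definition $C'v'=\lambda' v'$ together with $v'_j\geq v'_n\geq 0$ for all $1\leq j\leq n-1$. The only thing to show is that the same vector $v'$ satisfies $(C'+dI)v'=(\lambda'+d)v'$.

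First I would carry out the one-line computation
\begin{equation*}
(C'+dI)v' \;=\; C'v' + dIv' \;=\; \lambda' v' + d v' \;=\; (\lambda'+d) v',
\end{equation*}
using the distributive law of matrix-vector multiplication and the hypothesis $C'v'=\lambda' v'$. This already exhibits $v'$ as an eigenvector of $C'+dI$ with eigenvalue $\lambda'+d$.

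Next I would note that being \emph{rooted} is a property of the coordinates of $v'$ alone (the inequalities $v'_j\geq v'_n\geq 0$ for $1\leq j\leq n-1$), and has nothing to do with which matrix $v'$ happens to be an eigenvector of. Consequently, the same $v'$ serves as a rooted eigenvector of $C'+dI$ for the shifted eigenvalue $\lambda'+d$, completing the proof.

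There is no real obstacle here; the lemma is simply recording the elementary observation that adding $dI$ translates every eigenvalue by $d$ while preserving every eigenvector, and that the rootedness condition is preserved trivially because it is intrinsic to the vector. The lemma is stated separately because it will later be used, together with constructions producing a rooted eigenvector for a base matrix, to put the diagonal entries of matrices of the form~(\ref{ne1.0}) into play.
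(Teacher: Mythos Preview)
Your proof is correct. The paper itself offers no proof at all for this lemma (it is stated with a \qed\ immediately after, and the preceding sentence says ``The following lemma comes immediately''), so your one-line verification $(C'+dI)v'=C'v'+dv'=(\lambda'+d)v'$ together with the observation that rootedness is intrinsic to $v'$ is exactly the argument the paper leaves implicit.
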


 A rooted column vector defined in Definition~\ref{d3.1} is generalized to a rooted matrix as follows.
\begin{defn}\label{def4.1}
A  matrix $C'=(c'_{ij})$ is called {\it rooted}  if its  columns and its row-sum vector are all rooted except the last column of $C'$.
\end{defn}

The vertex $Q$ in (\ref{defnq}) is invertible with
$$Q^{-1}=\begin{pmatrix}
1 & 0 & \cdots & 0 & -1 \\
0 & 1 & \cdots & 0 & -1 \\
\vdots & \vdots & \ddots & \vdots & \vdots \\
0 & 0 & \cdots & 1 & -1 \\
0 & 0 & \cdots & 0 & 1 \\
\end{pmatrix}.$$ Multiplying $Q^{-1}$ to $C'Q$ in (\ref{C'Q}),   $Q^{-1}C'Q$ is
\begin{equation}\label{e3.1}\begin{pmatrix}
c'_{11}-c'_{n1}     & c'_{12}-c'_{n2} & \cdots     & c'_{1\ n-1}-c'_{n\ n-1} & r'_1-r'_n \\
c'_{21}-c'_{n1}     & c'_{22}-c'_{n2} & \cdots     & c'_{2\ n-1}-c'_{n\ n-1} & r'_2-r'_n \\
\vdots              & \vdots & \ddots              & \vdots & \vdots \\
c'_{n-1\ 1}-c'_{n1} & c'_{n-1\ 2}-c'_{n2} & \cdots & c'_{n-1\ n-1}-c'_{n\ n-1} & r'_{n-1}-r'_{n} \\
c'_{n1}             & c'_{n2} & \cdots             & c'_{n\ n-1} & r'_n \\
\end{pmatrix}.
\end{equation}

The matrices $C'$ and $Q^{-1}C'Q$ have the same set of eigenvalues. Moreover, $v'$ is an eigenvector of $C'$ for $\lambda'$ if and only if $u=Q^{-1}v'$ is an eigenvector of $Q^{-1}C'Q$ for $\lambda'$. From (\ref{e3.1}),
  $C'$ is rooted if and only if $Q^{-1}C'Q$ is nonnegative.
The first part of the following lemma follows immediately from the above discussion and Theorem~\ref{PF} by choosing $\lambda'=\rho(C')$.

\begin{lem}\label{l5.3}
If $C'$ is a rooted matrix, then $Q^{-1}C'Q$ is nonnegative, $\rho(C')$ is an eigenvalue of $C'$,
and $C'$ has a rooted eigenvector $v'=Qu$ for $\rho(C')$,
where $u$ is a nonnegative eigenvector of $Q^{-1}C'Q$ for $\rho(C')$.
Moreover, with $v'=(v'_1, v'_2, \ldots, v'_n)^T$,  the following (i)-(ii) hold.
\begin{enumerate}
\item[(i)] If $C' [n|n)$ is positive, then $v'$ is positive.
\item[(ii)] If  $C' [n|n)$ is positive and  $r'_i> r'_n$ for all $1\leq i\leq n-1$, then $v'_j>v'_n$ for all $1\leq j\leq n-1.$
\end{enumerate}
\end{lem}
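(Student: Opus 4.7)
My approach would be to exploit the similarity between $C'$ and the nonnegative matrix $B := Q^{-1}C'Q$ displayed in (\ref{e3.1}), then pull back Perron--Frobenius information through the change of basis $Q$. The main observations that structure the proof are that (a) similar matrices have identical spectra, (b) rootedness of $C'$ is exactly what one needs to make every entry of $B$ nonnegative, and (c) by Lemma~\ref{l3.15}(i), nonnegativity of a Perron eigenvector of $B$ translates directly into rootedness of the corresponding eigenvector $v' = Qu$ of $C'$. The strictness statements (i) and (ii) will then be extracted by plugging the eigenvector equation $Bu = \rho(C')u$ into individual rows and using positivity of $C'[n|n)$ to rule out vanishing components of $u$.

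The first step is to read off the entries of $B$ from (\ref{e3.1}) and observe that, under Definition~\ref{def4.1}, every entry is nonnegative: the upper-left $(n-1)\times(n-1)$ block uses $c'_{ij} \geq c'_{nj}$ (rootedness of columns $1,\ldots,n-1$), the last column uses $r'_i \geq r'_n$ (rootedness of the row-sum vector), and the last row and corner use $c'_{nj} \geq 0$ and $r'_n \geq 0$, both of which are built into Definition~\ref{d3.1}. Applying Theorem~\ref{PF}(i) to $B$ yields a nonnegative nonzero eigenvector $u$ for $\rho(B)$. Since $B = Q^{-1}C'Q$, we have $\rho(C') = \rho(B)$ as an eigenvalue of $C'$, and $v' := Qu$ is a corresponding eigenvector of $C'$ which is rooted by Lemma~\ref{l3.15}(i).

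For part (i), I would argue that $u_n > 0$: assuming $u_n = 0$, the last component of $Bu = \rho(C')u$ reads $\sum_{j=1}^{n-1} c'_{nj}\, u_j = 0$, and since $C'[n|n)$ is positive with $u \geq 0$, this forces $u_j = 0$ for $1 \leq j \leq n-1$, contradicting $u \neq 0$. Given $u_n > 0$, the formula $v'_j = u_j + u_n$ for $j < n$ and $v'_n = u_n$ immediately yields $v' > 0$.

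For part (ii), I would bootstrap from $u_n > 0$ to $u_i > 0$ for every $i \leq n-1$. Suppose $u_i = 0$ for some such $i$; the $i$-th component of $Bu = \rho(C')u$ becomes
\[
\sum_{j=1}^{n-1}(c'_{ij} - c'_{nj})\, u_j + (r'_i - r'_n)\, u_n = 0.
\]
Each summand is nonnegative by rootedness, while the last term $(r'_i - r'_n)\,u_n$ is \emph{strictly} positive because $r'_i > r'_n$ by hypothesis and $u_n > 0$ by part (i), a contradiction. Hence $u_j > 0$ for $1 \leq j \leq n-1$, and Lemma~\ref{l3.15}(ii) (or directly $v'_j = u_j + u_n > u_n = v'_n$) finishes the proof. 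The only mildly delicate step is the argument that $u_n > 0$ in part (i); everything else is a routine tracking of the entries of (\ref{e3.1}) through the eigenvalue equation.
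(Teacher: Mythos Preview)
Your argument is correct. The setup and part~(i) are essentially the paper's own proof, just expressed through $u$ and $Bu=\rho(C')u$ rather than $v'$ and $C'v'=\rho(C')v'$; since $v'_n=u_n$ and, under the hypothesis $u_n=0$, also $v'_j=u_j$ for $j<n$, the two computations are literally the same.

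For part~(ii) the paper takes a different route: it observes that under the hypotheses the matrix $Q^{-1}C'Q$ in~(\ref{e3.1}) is irreducible (the positivity of $C'[n|n)$ makes the last row of $B$ positive off the diagonal, and $r'_i>r'_n$ makes the last column of $B$ positive off the diagonal, so vertex $n$ communicates with every other vertex), and then invokes the irreducible case of Theorem~\ref{PF} to conclude that $u$ is positive. Your approach avoids this structural detour by reading off the $i$-th coordinate of $Bu=\rho(C')u$ directly and using $u_n>0$ from~(i) together with $r'_i-r'_n>0$ to force $u_i>0$. Your argument is a bit more elementary and self-contained; the paper's is shorter once one is willing to quote the irreducible Perron--Frobenius statement. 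Both lead to the same conclusion via Lemma~\ref{l3.15}(ii).
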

\begin{proof}
It remains to prove the second part.

(i) suppose that $C' [n|n)$ is positive and $v'_n=0$. Then
$$\sum_{j=1}^{n-1}c'_{nj}v'_j=\sum_{j=1}^nc'_{nj}v'_j=(C'v')_n=\rho(C')v'_n=0.$$
Hence $v'$ is a zero vector since $c'_{nj}>0$ for $j\leq n-1$, a contradiction. So $v'_n>0$ and $v'>0$ since $v'$ is rooted.

(ii) The assumptions imply that the matrix $Q^{-1}C'Q$ in (\ref{e3.1}) is irreducible.
Hence $u$ is positive. By Lemma~\ref{l3.15}(ii), $v'_j>v'_n$ for $1\leq j<n.$
\end{proof}

The largest real eigenvalue of the following matrix will be used to obtain  bounds of the spectral radius of a nonnegative matrix.

Fix  $d,f,r_1,r_2,\ldots,r_n\geq 0$ such that $r_j\geq r_n$ for $1\leq j\leq n-1$, and let
\begin{equation}\label{ee5.2}
M_n(d, f, r_1, r_2, \ldots, r_n)=
\begin{pmatrix}
d & f &  \cdots & f & r_{1}-(d+(n-2)f) \\
f & d &   & f & r_{2}-(d+(n-2)f) \\
\vdots &  & \ddots & \vdots & \vdots \\
f & f & \cdots  & d & r_{n-1}-(d+(n-2)f)\\
f & f & \cdots & f & r_{n}-(n-1)f
\end{pmatrix}
\end{equation} be an $n\times n$ matrix with row-sum vector $(r_1, r_2, \ldots, r_n)^T$.

Note that for any square matrix $C'$, it might be $\rho(C'+dI)\not=\rho(C')+d$, but $\rho_r(C'+dI)=\rho_r(C')+d$ always holds, where $\rho_r(C'+dI)$ and $\rho_r(C')$ are the largest real eigenvalues of $C'+dI$ and $C'$ respectively.
Also $\rho(C')=\rho_r(C')$ if $C'$ is nonnegative.

\begin{lem}\label{l4.4} The following (i)-(ii) hold.
\begin{enumerate}
\item[(i)] The matrix $M_n(d, f, r_1, r_2, \ldots, r_n)$ has a rooted eigenvector $v'=(v'_1, v'_2, \ldots, v'_n)^T$ for the largest real eigenvalue $\rho_r(M_n(d, f, r_1, r_2, \ldots, r_n))$ of $M_n(d, f, r_1, r_2, \ldots, r_n)$.
\item[(ii)] If $f>0$, then $v'>0$.
\end{enumerate}
\end{lem}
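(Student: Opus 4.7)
The plan is to deduce Lemma~\ref{l4.4} from Lemma~\ref{l5.3} by a diagonal shift that converts $M_n(d,f,r_1,\ldots,r_n)$ into a rooted matrix in the sense of Definition~\ref{def4.1}. Specifically, I would set $c:=\max(f-d,\,0)$ and $M':=M_n(d,f,r_1,\ldots,r_n)+cI$. Only the diagonal is altered: $d$ is replaced by $d+c\geq f$ in the first $n-1$ rows, and $r_n-(n-1)f$ is replaced by $r_n-(n-1)f+c$ in the last row.

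The first routine verification is that $M'$ is rooted. For $1\leq j\leq n-1$, column $j$ of $M'$ consists of $f$'s except for the diagonal entry $d+c\geq f$, and since the last entry is $f\geq 0$, the column is a rooted vector. The row-sum vector of $M'$ equals $(r_1+c,\ldots,r_n+c)^T$, and the combined inequalities $r_j\geq r_n$, $c\geq 0$, and $r_n\geq 0$ give $r_j+c\geq r_n+c\geq 0$, so this vector is rooted as well. Applying Lemma~\ref{l5.3} to $M'$ then yields a rooted eigenvector $v'$ of $M'$ for $\rho(M')$, together with the fact that $Q^{-1}M'Q$ is nonnegative. Since $Q^{-1}M'Q$ is similar to $M'$, its Perron--Frobenius spectral radius $\rho(Q^{-1}M'Q)=\rho(M')$ is a real eigenvalue of $M'$, so $\rho(M')=\rho_r(M')$.

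To transfer this back to $M_n$, I would invoke the shift identity $\rho_r(M')=\rho_r(M_n(d,f,r_1,\ldots,r_n))+c$ noted just before the lemma. This gives $\rho(M')-c=\rho_r(M_n(d,f,r_1,\ldots,r_n))$, and since $M_nv'=(M'-cI)v'=(\rho(M')-c)v'$, the same rooted vector $v'$ is an eigenvector of $M_n(d,f,r_1,\ldots,r_n)$ for $\rho_r(M_n(d,f,r_1,\ldots,r_n))$, proving (i). For (ii), if $f>0$ then $M'[n|n)=(f,f,\ldots,f)$ is positive, and Lemma~\ref{l5.3}(i) upgrades $v'$ to a strictly positive vector. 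The only delicate point is choosing $c$ so that the non-last columns and the row-sum vector of $M'$ are simultaneously rooted; once that one-line check is done, everything follows mechanically from Lemma~\ref{l5.3} and the shift identity.
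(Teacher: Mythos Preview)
Your proof is correct and follows essentially the same approach as the paper's own proof. The only cosmetic difference is that the paper splits into the two cases $d\geq f$ and $d<f$, whereas you unify them by setting $c=\max(f-d,0)$; in both arguments the key step is to add $cI$ (with $c=f-d$ when needed) so that the shifted matrix becomes rooted, apply Lemma~\ref{l5.3}, and then shift back using $\rho_r(M_n+cI)=\rho_r(M_n)+c$.
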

\begin{proof} Let $M_n:=M_n(d, f, r_1, r_2, \ldots, r_n)$.
First assume $d\geq f$. Then $M_n$ is rooted. (i)-(ii) follows from (i)-(ii) of Lemma~\ref{l5.3},  in particular $\rho(M_n)=\rho_r(M_n)$.
If  $d<f$, then  the matrix $(f-d)I+M_n$  is a rooted matrix.
As in the first part, let $v'$ be a rooted eigenvector of $(f-d)I+M_n$ for $\rho((f-d)I+M_n).$ Note that $v'$ is also a rooted eigenvector of $M_n$ for $\rho_r(M_n)=\rho((f-d)I+M_n)-(f-d).$ This proves (i), and (ii) follows similarly  from (ii) of Lemma~\ref{l5.3}.
\end{proof}

\section{Equitable partition}\label{s3}

Pattern in $C'$ will make it easier to compute its eigenvalue and to find the bound $\lambda'$ obtained in Theorem~\ref{t3.1} and Theorem~\ref{t3.1'}.
For a partition $\Pi=\{\pi_1, \pi_2, \ldots, \pi_\ell\}$ of $[n]$,
 the $\ell\times \ell$ matrix $\Pi(C'):=(\pi'_{ab})$, where
$$\pi'_{ab}:= \frac{1}{|\pi_a|}\sum_{i\in \pi_a, j\in \pi_b} c'_{ij},$$
is called the {\it quotient matrix} of $C'$ with respect to $\Pi$. In matrix notation,
\begin{equation}\label{e5.1}
\Pi(C')=(S^TS)^{-1}S^TC'S,
\end{equation}
where $S=(s_{jb})$ is the $n\times \ell$ {\it characteristic matrix} of $\Pi$, i.e.,
$$s_{jb}=
\left\{
\begin{array}{ll}
1, & \hbox{if $j\in \pi_b$;} \\
0, & \hbox{otherwise.} \\
\end{array}
\right.$$
for $1\leq j\leq n,$ and $1\leq b\leq \ell.$
If
$$\pi'_{ab}=\sum_{j\in \pi_b} c'_{ij}\qquad (1\leq a, b\leq \ell)$$
for all $i\in \pi_a,$ then  $\Pi(C')=(\pi'_{ab})$ is called
the {\it equitable quotient matrix} of $C'$ with respect to $\Pi.$
Note that $\Pi(C')$ is an equitable quotient matrix if and only if  \begin{equation}\label{e5.2}S\Pi(C')=C'S.\end{equation}

\begin{lem}[{\cite[Lemma 2.3.1]{Brou}}]\label{q1}
If an $n\times n$ matrix $C'$ has an equitable quotient matrix $\Pi(C')$ with respect to partition $\Pi=\{\pi_1, \pi_2, \ldots, \pi_\ell\}$ of $[n]$ with characteristic matrix $S$,  and $\lambda'$ is an eigenvalue of  $\Pi(C')$ with eigenvector $u'$,   then   $\lambda'$ is an  eigenvalue of $C'$ with eigenvector  $Su'$. Moreover, if $u'$ is rooted and $n\in \pi_\ell$, then $Su'$ is rooted.
\end{lem}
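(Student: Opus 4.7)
The plan is to prove both parts by direct computation using the defining identity (\ref{e5.2}), namely $S\Pi(C')=C'S$, which is exactly the algebraic content of $\Pi(C')$ being an equitable quotient. The first statement is a one-line lifting argument, and the ``moreover'' part is a straightforward check against Definition~\ref{d3.1}.

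For the eigenvalue statement, I would start from $\Pi(C')u'=\lambda' u'$ and simply multiply on the left by $S$:
\begin{equation*}
C'(Su')=(C'S)u'=(S\Pi(C'))u'=S(\lambda' u')=\lambda'(Su').
\end{equation*}
It remains to argue that $Su'\ne 0$, so that $Su'$ really is an eigenvector and not just a trivial relation. This follows from the structure of the characteristic matrix $S$: each row of $S$ has exactly one $1$, so $(Su')_j=u'_b$ whenever $j\in\pi_b$. Assuming each block $\pi_b$ is nonempty (which is part of being a partition), $Su'=0$ forces $u'_b=0$ for every $b$, contradicting $u'\ne 0$.

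For the rootedness claim, assume $u'=(u'_1,\ldots,u'_\ell)^T$ is rooted, i.e.\ $u'_b\geq u'_\ell\geq 0$ for $1\leq b\leq \ell-1$, and that $n\in\pi_\ell$. Then $(Su')_n=u'_\ell\geq 0$, and for any other index $j$, writing $j\in\pi_b$, we have $(Su')_j=u'_b$. If $b<\ell$ then $u'_b\geq u'_\ell=(Su')_n$; if $b=\ell$ then $(Su')_j=u'_\ell=(Su')_n$. Either way $(Su')_j\geq (Su')_n\geq 0$, so $Su'$ is rooted by Definition~\ref{d3.1}.

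I do not anticipate any real obstacle here: the content is essentially the identity (\ref{e5.2}) plus the bookkeeping that $S$ just ``blows up'' a length-$\ell$ vector by repeating entry $u'_b$ on the block $\pi_b$. The only point worth stating carefully is that $Su'\ne 0$ when $u'\ne 0$, so that $\lambda'$ genuinely appears in the spectrum of $C'$ rather than being introduced spuriously.
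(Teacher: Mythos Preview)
Your proof is correct and follows the same approach as the paper, which simply computes $C'Su'=S\Pi(C')u'=\lambda' Su'$ from (\ref{e5.2}) and declares the rootedness claim clear. Your version is in fact more careful, since you explicitly verify that $Su'\ne 0$ and spell out the rootedness check that the paper leaves implicit.
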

\begin{proof} From (\ref{e5.2}),
$C'Su'=S\Pi(C')u'=\lambda' Su'.$ The second statement is clear.
\end{proof}

Under some conditions, the spectral radius is preserved by equitable quotient operation.

\begin{lem}\label{lem2.2'} If $\Pi=\{\pi_1,\ldots, \pi_\ell\}$ is a partition of $[n]$ and
  $C'=(c'_{ij})$ is an $n\times n$ matrix satisfying
 $c'_{ij}=c'_{kj}$ for all $i, k$ in the same part $\pi_a$ of $\Pi$ and $j\in [n],$
 then $C'$ and its quotient matrix $\Pi(C')$ with respect to $\Pi$ have the same set of nonzero eigenvalues.
 In particular, $\rho(C')=\rho(\Pi(C'))$.
\end{lem}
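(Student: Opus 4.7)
The plan is to exploit the row-constancy hypothesis to factor $C'$ as a product of a ``tall'' and a ``wide'' matrix, recognise $\Pi(C')$ as the product in the reverse order, and then invoke the classical fact that the two orders of multiplication share the same nonzero spectrum.

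First I would set up the factorisation. Let $S$ be the $n\times\ell$ characteristic matrix of $\Pi$, and define an $\ell\times n$ matrix $R=(r_{aj})$ by $r_{aj}:=c'_{ij}$ for any (hence every) $i\in\pi_a$; the hypothesis $c'_{ij}=c'_{kj}$ for $i,k\in\pi_a$ makes this well-defined. A direct computation gives $(SR)_{ij}=r_{a(i)j}=c'_{ij}$, where $a(i)$ is the unique index with $i\in\pi_{a(i)}$, so $C'=SR$. Since $S^TS=\mathrm{diag}(|\pi_1|,\ldots,|\pi_\ell|)$ is invertible, the formula $\Pi(C')=(S^TS)^{-1}S^TC'S$ from (\ref{e5.1}) simplifies, via $S^TC'=S^TSR$, to
\begin{equation*}
\Pi(C')=(S^TS)^{-1}(S^TS)RS=RS.
\end{equation*}
(As a side benefit, this shows that the quotient $\Pi(C')$ is automatically equitable under our hypothesis, since $S\Pi(C')=SRS=C'S$ verifies (\ref{e5.2}).)

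Next I would invoke the well-known identity that for $A\in\mathbb{R}^{m\times k}$ and $B\in\mathbb{R}^{k\times m}$, the matrices $AB$ and $BA$ have the same nonzero eigenvalues with the same algebraic multiplicities; concretely, $\det(\lambda I_m-AB)=\lambda^{m-k}\det(\lambda I_k-BA)$ when $m\ge k$. Applying this with $A=S$ and $B=R$ gives that $C'=SR$ and $\Pi(C')=RS$ have exactly the same set of nonzero eigenvalues.

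Finally, for the spectral radius statement: the modulus of any nonzero eigenvalue is strictly greater than $0$, so if the common nonzero spectrum is nonempty then $\rho(C')$ and $\rho(\Pi(C'))$ are both equal to the maximum modulus in that common set; if the nonzero spectrum is empty then both matrices are nilpotent and $\rho(C')=0=\rho(\Pi(C'))$. I do not anticipate any real obstacle here — the only point deserving a moment's care is verifying the identity $\Pi(C')=RS$ via (\ref{e5.1}), so that the proof reduces cleanly to the standard $SR$ vs.\ $RS$ spectrum lemma.
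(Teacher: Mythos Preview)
Your proof is correct and takes a genuinely different route from the paper. The paper argues at the level of eigenvectors: given a nonzero eigenvalue $\lambda'$ of $C'$ with eigenvector $v'$, the row-constancy hypothesis forces $v'_i=(C'v')_i/\lambda'=(C'v')_k/\lambda'=v'_k$ whenever $i,k$ lie in the same block, so $v'$ descends to an eigenvector $u'$ of $\Pi(C')$ for $\lambda'$; the reverse inclusion is supplied by Lemma~\ref{q1}. Your argument instead exhibits the factorisation $C'=SR$, identifies $\Pi(C')=RS$, and appeals to the Sylvester identity $\det(\lambda I_n-SR)=\lambda^{n-\ell}\det(\lambda I_\ell-RS)$. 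Your route is slicker and yields more---equality of nonzero eigenvalues \emph{with algebraic multiplicities}, not merely as sets---at the cost of importing the $AB$/$BA$ spectrum lemma; the paper's route is entirely self-contained within the equitable-quotient framework already developed, and the eigenvector-descent idea it uses resurfaces elsewhere in the paper.
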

\begin{proof} From the construction of $C'$, $\Pi(C')$ is clear to be an equitable quotient matrix of $C'$.
Let $\lambda'$ be a nonzero eigenvalue of $C'$ with eigenvector $v'=(v'_1,\ldots,v'_n)^T$.
Then $v'_i=(Cv')_i/\lambda'=(Cv')_{k}/\lambda'=v'_{k}$ for all $i, k$  in the same part $\pi_a$ of $\Pi.$
Let $u'_a=v'_i$ with any choice of  $i\in \pi_a.$
Then $u':=(u'_1,\ldots,u'_\ell)\ne 0$, and  $\Pi(C')u'=\lambda' u'$.
 From this and Lemma~\ref{q1}, we know that $C'$ and $\Pi(C')$ have the same set of nonzero eigenvalues, and thus $\rho(C')=\rho(\Pi(C')).$
\end{proof}


\begin{lem}\label{l7.1} For $M_n=M_n(d, f, r_1, r_2, \ldots, r_n)$ defined in \eqref{ee5.2}, where  $d, f\geq 0$ and $r_1\geq r_2\geq \cdots\geq r_n\geq 0$, we have  the following (i)-(iii).
\begin{enumerate}
\item [(i)] The largest real eigenvalue $\rho_r(M_n)$ of $M_n$ satisfies \begin{align*}
    \rho_r(M_n):=&\frac{r_n+d-f+\sqrt{(r_n-d+f)^2+4f\sum_{i=1}^{n-1}(r_i-r_n)}}{2}\\
    \geq& \max(d-f, r_n).\end{align*}
\item[(ii)] If $r_n=0$, then $$\rho_r(M_n)=\frac{d-f+\sqrt{(d-f)^2+4f m}}{2},$$
where $m:=\sum_{i=1}^{n-1} r_i$ is the sum of all entries of $M_n$.
\item [(iii)]  If $r_t=r_n$ for $t\leq n$, then $\rho_r(M_t)=\rho_r(M_n).$
\end{enumerate}
\end{lem}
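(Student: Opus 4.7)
Part (i). The key observation is that $M_n^T$ admits the $2\times 2$ matrix
$$C''' = \begin{pmatrix} d + (n-2)f & f \\ \sum_{i=1}^{n-1} r_i - (n-1)(d + (n-2)f) & r_n - (n-1)f \end{pmatrix}$$
as its equitable quotient with respect to the partition $\{[n-1],\{n\}\}$; this is the $\ell=n$ specialization of (\ref{n31.3}). A routine calculation of $\det(\lambda I-C''')$ yields the characteristic polynomial
$$\lambda^{2}-(r_{n}+d-f)\lambda+(d+(n-2)f)\,r_{n}-f\sum_{i=1}^{n-1}r_{i},$$
and expanding its discriminant collapses to $(r_{n}-d+f)^{2}+4f\sum_{i=1}^{n-1}(r_{i}-r_{n})$, so the larger root of $C'''$ is exactly the expression claimed in (i). By Lemma~\ref{q1}, both roots are eigenvalues of $M_{n}^{T}$, hence of $M_{n}$.

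To identify $\rho_{r}(M_{n})$ with this larger root, I invoke Lemma~\ref{l4.4}(i) to produce a rooted eigenvector $v'=(v'_{1},\ldots,v'_{n})^{T}$ of $M_{n}$ for $\rho_{r}(M_{n})$ and set $S=\sum_{j=1}^{n-1}v'_{j}$. Summing the first $n-1$ eigenvector equations and keeping the $n$-th equation separately produces two linear relations in $(S,v'_{n})$ that, when $v'_{n}>0$, force $\rho_{r}(M_{n})$ to be a root of the same quadratic. The rooted condition $v'_{j}\geq v'_{n}$ gives $S\geq (n-1)v'_{n}$, and inserting this into the identity $fS=(\rho_{r}(M_{n})-r_{n}+(n-1)f)v'_{n}$ (coming from the $n$-th eigenvector equation) yields $\rho_{r}(M_{n})\geq r_{n}$. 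A short Vieta-plus-discriminant check shows that the smaller root of the quadratic is $\leq\min(d-f,r_{n})\leq r_{n}$, so $\rho_{r}(M_{n})$ must be the larger root; the announced lower bound $\rho_{r}(M_{n})\geq\max(d-f,r_{n})$ falls out of the same check. The corner case $v'_{n}=0$ forces $f=0$ (otherwise $S=0$ and hence $v'=0$), in which case $M_{n}$ is upper triangular with diagonal entries $d,\ldots,d,r_{n}$ and the formula directly reduces to $\max(d,r_{n})$.

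Parts (ii) and (iii) follow quickly from (i). For (ii), the hypothesis $r_{n}=0$ collapses $\sum_{i=1}^{n-1}(r_{i}-r_{n})$ to $\sum_{i=1}^{n-1}r_{i}$, which equals the sum $m$ of all entries of $M_{n}$ because the $n$-th row sums to $r_{n}=0$; plugging into the formula of (i) gives the stated expression. For (iii), monotonicity together with $r_{t}=r_{n}$ forces $r_{t}=r_{t+1}=\cdots=r_{n}$, so $\sum_{i=1}^{n-1}(r_{i}-r_{n})=\sum_{i=1}^{t-1}(r_{i}-r_{t})$; applying the formula of (i) both to $M_{n}$ and to $M_{t}=M_{t}(d,f,r_{1},\ldots,r_{t})$ makes the two right-hand sides identical.

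The hardest step is the identification of $\rho_{r}(M_{n})$ as the \emph{larger} root of the quadratic rather than the smaller root or the spurious eigenvalue $d-f$ (which appears with multiplicity $n-2$ in the spectrum of $M_{n}$ whenever $f>0$ and the $r_{i}$ are not all equal, as a quick rank computation on $M_{n}-(d-f)I$ reveals). The combination of the rooted eigenvector from Lemma~\ref{l4.4} (giving $\rho_{r}(M_{n})\geq r_{n}$) with the Vieta-based bracket ``smaller root $\leq\min(d-f,r_{n})\leq\max(d-f,r_{n})\leq$ larger root'' is what clinches the matter without circular reasoning.
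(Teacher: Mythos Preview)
Your argument is correct but follows a genuinely different route from the paper's. The paper shifts first: it applies Lemma~\ref{lem2.2'} to $(M_n+(f-d)I)^T$, whose rows indexed by $[n-1]$ are all equal, and thereby reads off the \emph{entire} spectrum of $M_n$ as $(d-f)^{n-2}$ together with the two roots of the quadratic; identifying $\rho_r(M_n)$ is then a one-line check that the larger root dominates $d-f$ and $r_n$. You instead take the equitable quotient of $M_n^T$ directly (the matrix~(\ref{n31.3}) with $\ell=n$), which via Lemma~\ref{q1} only tells you that both roots are eigenvalues of $M_n$, and then compensate by aggregating the rooted eigenvector from Lemma~\ref{l4.4}(i) to show that $(S,v'_n)$ is a (left) eigenvector of $C'''$, hence that $\rho_r(M_n)$ itself is one of the two roots. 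This trade avoids Lemma~\ref{lem2.2'} and the shift, at the cost of the extra identification step. One remark: that step can be shortened considerably. Once you know from Lemma~\ref{q1} that $\lambda_+$ is a real eigenvalue of $M_n$ and from your aggregation that $\rho_r(M_n)\in\{\lambda_-,\lambda_+\}$, the defining maximality of $\rho_r$ forces $\rho_r(M_n)=\lambda_+$ immediately; the derivation of $\rho_r(M_n)\geq r_n$ from rootedness and the separate corner-case analysis for $v'_n=0$ are then unnecessary for the identification (your Vieta evaluation $p(r_n)=p(d-f)=-f\sum_{i=1}^{n-1}(r_i-r_n)\leq 0$ is still what delivers the stated bound $\lambda_+\geq\max(d-f,r_n)$). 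Parts (ii) and (iii) match the paper's proof.
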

\begin{proof}
(i)  We consider the matrix $M_n+(f-d)I$. Note that $(M_n+(f-d)I)^T$ has equitable quotient matrix
$$\Pi((M_n+(f-d)I)^T)=\begin{pmatrix} (n-1)f & f \\  \sum_{i=1}^{n-1} (r_i-(d+(n-2)f))  & r_n-(d+(n-2)f) \end{pmatrix}$$
with respect to the  partition $\Pi=\{\{1, 2, \ldots, n-1\}, \{n\} \}$ of $[n]$,
which has two eigenvalues $$\frac{r_n-d+f\pm\sqrt{(r_n-d+f)^2+4f\sum_{i=1}^{n-1}(r_i-r_n)}}{2}.$$
Since $((M_n+(f-d)I)^T)_{ij}=((M_n+(f-d)I)^T)_{kj}$ all $i,k\in [n-1]$ and $j\in [n]$ and by  Lemma~\ref{lem2.2'}, $(M_n+(f-d)I)^T$ has eigenvalues
$$0^{n-2},\frac{r_n-d+f\pm\sqrt{(r_n-d+f)^2+4f\sum_{i=1}^{n-1}(r_i-r_n)}}{2},$$
and $M_n$ has eigenvalues
$$(d-f)^{n-2},\frac{r_n+d-f\pm\sqrt{(r_n-d+f)^2+4f\sum_{i=1}^{n-1}(r_i-r_n)}}{2}.$$
Note that
$$\frac{r_n+d-f+\sqrt{(r_n-d+f)^2+4f\sum_{i=1}^{n-1}(r_i-r_n)}}{2}\geq \max(d-f, r_n).$$
So the proof of (i) is complete.

(ii) and (iii)  follow from (i) immediately.
\end{proof}

\section{Spectral upper bounds with prescribed sum of entries}

Let $J_k$, $I_k$ and $O_{k}$ be the $k\times k$ all-one matrix, the $k\times k$ identity matrix and the $k\times k$ zero matrix respectively. We recall an old  result of Richard Stanley \cite{s87}.

\begin{thm}[\cite{s87}]\label{t6.1s}
Let $C=(c_{ij})$ be an $n\times n$ symmetric (0,1) matrix with zero trace. Let the number of 1's of $C$ be $2e$. Then
$$\rho(C)\leq \frac{-1+\sqrt{1+8e}}{2}.$$
Equality holds if and only if
$$e=\binom{k}{2}$$
and $PCP^T$ has the form
$$
\left(
\begin{matrix}
J_k-I_k & 0 \\
0   & O_{n-k}
\end{matrix}
\right)=(J_k-I_k)\oplus O_{n-k}
$$
for some permutation matrix $P$ and positive integer $k$.
\end{thm}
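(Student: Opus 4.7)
The plan is to realize Stanley's bound as $\rho_r(C')$ for a suitably chosen $C'$ and then invoke Theorem~\ref{t3.1}. The key identification is that Lemma~\ref{l7.1}(ii), specialized to $(d,f)=(0,1)$ and $m=2e$, reads $\rho_r(M)=\tfrac{-1+\sqrt{1+8e}}{2}$, which matches the bound. Since Lemma~\ref{l7.1}(ii) requires the last row-sum to vanish and $C$ may have no zero row-sum, I would embed $C$ into an $(n+1)\times(n+1)$ matrix $\tilde{C}$ by appending a zero row and a zero column; this preserves the spectral radius, the zero trace, and the number of $1$'s. After a permutation arranging $r_1\geq\cdots\geq r_n$, the row-sum vector of $\tilde{C}$ is $(r_1,\dots,r_n,0)^T$ with total $2e$, and I would set $C':=M_{n+1}(0,1,r_1,\ldots,r_n,0)$, which shares this row-sum vector.

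Next I would verify the four hypotheses of Theorem~\ref{t3.1} with $\tilde{C}$ in place of $C$: (i) off-diagonal entries of $\tilde{C}$ lie in $\{0,1\}$ and its diagonal is zero, so $\tilde{C}[-|n+1)\leq C'[-|n+1)$ and the row-sum vectors agree; (ii) holds by Lemma~\ref{l4.4}(i); (iii) by Theorem~\ref{PF}(i) applied to the nonnegative matrix $\tilde{C}$; (iv) because Lemma~\ref{l4.4}(ii) with $f=1$ gives a strictly positive $v'$, while the left Perron vector $v^T$ is a nonzero nonnegative row vector. Combining $\rho(C)=\rho(\tilde{C})\leq \rho_r(C')$ with Lemma~\ref{l7.1}(ii) (using $m=2e$) yields the upper bound. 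The converse half of the equality statement is immediate: if $C=(J_k-I_k)\oplus O_{n-k}$ then $\rho(C)=k-1$ and $e=\binom{k}{2}$, so $\tfrac{-1+\sqrt{1+8e}}{2}=\tfrac{-1+(2k-1)}{2}=k-1$.

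For the forward direction of the equality case, the subcase $e=0$ is trivial; otherwise $\rho(C)>0$. I would first reduce to the situation with no isolated vertices: if $r_i=0$ for some $i$, symmetry and zero trace force the $i$th row and column of $C$ to vanish, so $C=C_0\oplus O_{n_0}$ up to a permutation, and equality descends to $C_0$. Once all $r_i>0$, Lemma~\ref{l5.3}(ii) gives $v'_j>v'_{n+1}$ for every $j\leq n$, so equality clause~(b) of Theorem~\ref{t3.1} forces $c_{ij}=c'_{ij}$ for every $i\in W$ and $j\in[n]$, where $W:=\{i:v_i\neq 0\}$ is the support of the Perron vector of $C$ (note $v_{n+1}=0$ since $\rho(C)>0$ and row $n+1$ of $\tilde{C}$ is zero). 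Concretely this forces $c_{ii}=0$ and $c_{ij}=1$ for $i\in W$ and $j\in[n]\setminus\{i\}$. The final step is to show $W=[n]$: by symmetry $c_{ji}=c_{ij}=1$ for $i\in W$, $j\neq i$, so for any $j\notin W$, $(Cv)_j=\sum_{i\in W}v_i>0$, contradicting $(Cv)_j=\rho(C)v_j=0$. Hence $W=[n]$, so $C=J_n-I_n$; restoring isolated vertices gives $C=(J_k-I_k)\oplus O_{n-k}$ with $e=\binom{k}{2}$. The main obstacle is this last step — translating the pointwise equality constraints of Theorem~\ref{t3.1} into the global block-diagonal form — which is the step that genuinely combines the symmetry of $C$ with the eigenvector equation $Cv=\rho(C)v$.
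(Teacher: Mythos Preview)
Your proof is correct and follows essentially the same approach as the paper's proof of the more general Theorem~\ref{tt6.1} (of which Theorem~\ref{t6.1s} is the special case $d=0$, $f=1$): embed $C$ into $\tilde C=C\oplus O_1$, compare with $C'=M_{n+1}(0,1,r_1,\ldots,r_n,0)$ via Theorem~\ref{t3.1}, and read off the bound from Lemma~\ref{l7.1}(ii). Your equality argument exploits the symmetry of $C$ directly (showing $W=[n]$ via $(Cv)_j>0$ for $j\notin W$), whereas the paper, working without symmetry, instead observes that $\tilde v^T$ becomes a nonnegative left eigenvector of the irreducible matrix $fJ+(d-f)I$ and is therefore positive; both routes reach the same conclusion.
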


The following theorem generalizes Theorem~\ref{t6.1s} to nonnegative matrices, not necessary symmetric.
\begin{thm}\label{tt6.1}
Let $C=(c_{ij})$ be an $n\times n$ nonnegative matrix. Let $m$ be the sum of entries
and $d$ (resp. $f$) be any number which is larger than or equal to the largest diagonal element (resp. the largest off-diagonal element) of $M$.
Then
 \begin{equation}\label{e7.1}
\rho(C)\leq \frac{d-f+\sqrt{(d-f)^2+4mf}}{2}.
\end{equation}
Moreover, if $mf>0$ then the equality in (\ref{e7.1}) holds
 if and only if
 $m=k(k-1)f+kd$ and
 $PCP^T$ has the form
 $$\begin{pmatrix}
 fJ_k+(d-f)I_k & 0\\
 0   & O_{n-k}
 \end{pmatrix}=
 (fJ_k+(d-f)I_k)\oplus O_{n-k}$$   for some permutation matrix $P$ and some  nonnegative integer $k$.
\end{thm}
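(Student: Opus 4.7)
The plan is to apply Theorem~\ref{t3.1} to an $(n+1)$-dimensional augmentation of $C$. Let $\tilde C := C \oplus O_1$, so that $\rho(\tilde C) = \rho(C)$, the quantities $d$, $f$, $m$ are unchanged, and the row-sum vector of $\tilde C$ is $(r_1, r_2, \ldots, r_n, 0)^T$, where $r_i$ is the $i$-th row sum of $C$. I will use the comparison matrix $C' := M_{n+1}(d, f, r_1, \ldots, r_n, 0)$ from~\eqref{ee5.2}, whose row-sum vector equals that of $\tilde C$ and whose largest real eigenvalue is exactly $\lambda' := (d-f+\sqrt{(d-f)^2+4fm})/2$ by Lemma~\ref{l7.1}(ii). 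Checking the hypotheses of Theorem~\ref{t3.1} with $(C, C') \leftarrow (\tilde C, C')$ is routine: the entrywise inequality $\tilde C[-|n+1) \leq C'[-|n+1)$ follows from the defining inequalities for $d, f$ and the zero last row of $\tilde C$; the row-sum vectors coincide; Lemma~\ref{l4.4} supplies a rooted eigenvector $v'$ of $C'$ for $\lambda'$, positive when $f > 0$; Perron--Frobenius supplies a nonnegative left eigenvector $v$ of $\tilde C$ for $\lambda := \rho(\tilde C) = \rho(C)$; and $v^T v' > 0$. This yields~\eqref{e7.1}. The degenerate case $f = 0$ reduces to the trivial diagonal bound $\rho(C) \leq d$.

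For the equality case assume $mf > 0$ and $\rho(C) = \lambda'$. Then $\lambda = \lambda' > 0$, so the Perron left eigenvector of $\tilde C = C \oplus O_1$ has last coordinate zero. Condition~(a) of Theorem~\ref{t3.1} is automatic because the two row-sum vectors agree, so the entire equality content is in condition~(b). I will introduce the three support sets $B := \{i \in [n] : v_i > 0\}$, $P := \{i : r_i > 0\}$, and $T := \{j \in [n] : v'_j > v'_{n+1}\}$. A direct calculation of $u := Q^{-1}v'$ via~\eqref{e3.1} (or an application of Lemma~\ref{l5.3}(ii), whose positivity hypothesis holds since row $n{+}1$ of $C'$ is $(f, \ldots, f, -nf)$ and $f>0$) shows $T = P$. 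The left-eigenvector equation forces $c_{ij} = 0$ whenever $i \in B$ and $j \notin B$. The key combinatorial step is to deduce $B = P$ from~(b) together with this zero pattern: if $j \in P \setminus B$ then pairing it with any $i \in B$ (note $i \neq j$) forces via~(b) $c_{ij} = f > 0$, contradicting $c_{ij} = 0$; dually, if $i \in B \setminus P$, the whole row $i$ is zero yet~(b) forces $c_{i j_0} = f > 0$ for any $j_0 \in P \subseteq B$ different from $i$.

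Setting $K := B = P = T$ and $k := |K|$, condition~(b) together with the zero pattern then rigidly determines $C$: $c_{ii} = d$ for $i \in K$, $c_{ij} = f$ for distinct $i, j \in K$, $c_{ij} = 0$ for $i \in K$, $j \notin K$, and $c_{ij} = 0$ for $i \notin K$ (since then $r_i = 0$). A permutation aligning $K$ with $\{1, \ldots, k\}$ gives $P C P^T = (fJ_k + (d-f)I_k) \oplus O_{n-k}$ and $m = k(d+(k-1)f)$. The converse is the direct verification that $\rho(fJ_k + (d-f)I_k) = d + (k-1)f$ equals the right-hand side of~\eqref{e7.1} when $m = k(d+(k-1)f)$. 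The hardest step is the equality analysis, in particular the dual pair of contradictions establishing $B = P$; the rest is bookkeeping around Lemma~\ref{l7.1}(ii), Lemma~\ref{l4.4}, and the explicit form of $C' = M_{n+1}$.
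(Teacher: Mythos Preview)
Your setup for the inequality---augmenting to $\tilde C = C\oplus O_1$, comparing with $C'=M_{n+1}(d,f,r_1,\ldots,r_n,0)$, and invoking Theorem~\ref{t3.1} together with Lemma~\ref{l4.4} and Lemma~\ref{l7.1}(ii)---is exactly the paper's argument.

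For the equality case the two proofs diverge. The paper first handles the case where $C$ has no zero row: there Lemma~\ref{l5.3}(ii) gives $v'_j>v'_{n+1}$ for all $j\le n$, condition~(b) then yields $\tilde v^T C=\tilde v^T(fJ_n+(d-f)I_n)$, and irreducibility of $fJ_n+(d-f)I_n$ forces $\tilde v>0$ and hence $C=fJ_n+(d-f)I_n$. The general case is reduced to this one by permuting the zero rows to the bottom and applying the already-proved inequality to the top-left $k\times k$ block $C_1$, using $\rho(C_1)=\rho(C)$ and $m'\le m$ to force $m'=m$. Your route is more direct and unified: you compute $T=P$ from the explicit form of $Q^{-1}C'Q$ (a diagonal block $(d-f)I_n$ bordered by $r$ and $f\mathbf 1^T$), then pin down $B=P$ by the two symmetric contradictions, and read off $C$ at once. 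Both arguments are correct and comparable in length; yours avoids the case split and the recursive appeal to the bound, at the price of the eigenvector computation for $Q^{-1}C'Q$.

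One small imprecision worth fixing: the parenthetical citation of Lemma~\ref{l5.3}(ii) does not by itself yield $T=P$. That lemma presupposes $C'$ rooted (hence $d\ge f$) and $r'_i>r'_{n+1}$ for all $i\le n$ (hence $P=[n]$), and its conclusion is only $T=[n]$. Your primary justification---the direct computation of $u=Q^{-1}v'$ via~\eqref{e3.1}, which gives $u_i=r_iu_{n+1}/(\rho_r(C')-d+f)$ with $u_{n+1}>0$ and $\rho_r(C')>d-f$ when $mf>0$---is the correct one and should stand alone.
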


\begin{proof} If $f=0$ then the nonzero entries only appear in the diagonal of $C$, so $\rho(C)\leq d$ and
(\ref{e7.1}) holds. Assume $f>0$ for the remaining.
Consider the $(n+1)\times (n+1)$ nonnegative matrix $M=C\oplus O_1$ which  has row-sum vector $(r_1,r_2,\ldots,r_n,r_{n+1})^T$ with $r_{n+1}=0$ and  a nonnegative left eigenvector $v^T$ for $\rho(M)=\rho(C)$.
Let $C'=M_{n+1}(d, f, r_1, r_2, \ldots, r_{n+1})$ as defined in (\ref{ee5.2}) which has the same row-sum vector of $M$, and has a positive rooted eigenvector $v'=(v'_1, v'_2, \ldots, v'_{n+1})^T$ for $\rho_r(C')$ by Lemma~\ref{l4.4}(i).
Clearly $M[-|n+1)\leq C'[-|n+1)$ and $v^Tv'>0.$ Hence the assumptions (i)-(iv) in Theorem~\ref{t3.1}
hold with $(C, \lambda, \lambda')=(M, \rho(M), \rho_r(C'))$.
 Now by Theorem~\ref{t3.1} and Lemma~\ref{l7.1}(ii), we have
$$\rho(C)=\rho(M)\leq \rho_r(C')=\frac{d-f+\sqrt{(d-f)^2+4mf}}{2},$$
finishing the proof of the first part.

To prove the second part, assume $m=k(k-1)f+kd$ and $PCP^T=(fJ_k+(d-f)I_k)\oplus O_{n-k}$ for one direction.
Using $\rho(C)=\rho(PCP^T)=\rho(fJ_k+(d-f)I_k),$ we have
$$\rho(C)=(k-1)f+d=\frac{d-f+\sqrt{(d-f)^2+4mf}}{2}.$$

For the other direction, assume $\rho(C)=\rho_r(C')$ and $mf>0$. In particular $C\not=0$ and $M\not=0$.
Let $(v_1, v_2, \ldots, v_{n+1})$ be a nonnegative left eigenvector of $M$. Then $v_{n+1}=0$.
 Write $\tilde{v}^T=(v_1, v_2, \ldots,v_n)$.
We first assume that $C$ has no zero row. Then $r_i>r_{n+1}=0$ for $1\leq i\leq n$. By Lemma~\ref{l5.3}(ii) with
$(C', n)=(M, n+1)$,  we have $v'_j>v'_{n+1}$. Then
$c_{ij}=m_{ij}=c'_{ij}$ for the indices $1\leq i\leq n$ with $v_i\not=0$ and any $1\leq j\leq n$ by Theorem \ref{t3.1}(b).
Hence
\begin{equation}\label{ee6.2}\rho(C)\tilde{v}^T=\tilde{v}^TC=\tilde{v}^TC'(n+1|n+1)=\tilde{v}^T(fJ+(d-f)I).\end{equation}
 Since $\tilde{v}^T$ is a nonnegative left eigenvalue of the irreducible nonnegative matrix $fJ+(d-f)I$ for $\rho(C)$, we have  $\tilde{v}>0$. This together with $fJ+(d-f)I\geq C$ and (\ref{ee6.2}) will imply
$C=fJ+(d-f)I$, finishing the proof for the case under the assumption that $C$ has no zero row.
Assume that $C$ has $n-k$ zero rows for some $1\leq k\leq n-1$. Then there is a permutation matrix $P$ such that all zero rows of $PCP^T$ appear in the end, so the $(n-k)\times n$ submatrix  $PCP^T([k]|-]$ of $PCP^T$ is $0$ and the $k\times n$ submatrix $PCP^T[[k]|-]$ of $PCP^T$  has no zero row. Let $C_1=PCP^T[[k]|[k]]$
and $m'$ be the sum of entries in $C_1$. Notice that $\rho(C_1)=\rho(C)$ and  $m'\leq m$.
Applying the first part of the theorem to $C_1$, we have
$$\rho(C_1)\leq \frac{d-f+\sqrt{(d-f)^2+4m'f}}{2}\leq \frac{d-f+\sqrt{(d-f)^2+4mf}}{2}=\rho(C)=\rho(C_1),$$
forcing $m'=m$, $C_1$ has no zero row and $C_1=fJ_k+(d-f)I_k$.  Hence $PCP^T[[k]|[k])=0$ and this implies  $PCP^T=(fJ_k+(d-f)I_k)\oplus O_{n-k}$ and  $m=k(k-1)f+kd$.
\end{proof}


\section{$C'$ admitting an equitable quotient}\label{s3.5}

From now on the square matrix $C$ is nonnegative, and the eigenvalue $\rho(C)$ of $C$ is corresponding to a nonnegative left eigenvector $v^T$ by Theorem~\ref{PF}(i). Hence the assumption (iii) in Theorem~\ref{t3.1} and Theorem~\ref{t3.1'} immediately holds. In Lemma~\ref{l4.1} and Lemma~\ref{l5.3}, we know that a rooted matrix $C'$ and its translates are possessed of a rooted eigenvector for $\rho_r(C')$. In this section, we shall apply properties of the equitable quotient to provide matrices which are not translated from a rooted matrix but still have positive rooted eigenvectors.
We use this method to reduce the size of $C'$ in finding the bound $\lambda'$ of $\lambda=\rho(C)$ obtained in Theorem~\ref{t3.1} and Theorem~\ref{t3.1'}.

\begin{thm}\label{t6.1}
Let $C=(c_{ij})$ be a nonnegative $n\times n$ matrix with row-sum vector $(r_1,\ldots,r_n)^T$,  and
 $\Pi=\{\pi_1, \pi_2, \ldots, \pi_\ell\}$ a partition of $\{1, 2, \ldots, n\}$ with $n\in \pi_\ell$.
Let $C'=(c'_{ij})$ be an $n\times n$ matrix that admits
  an $\ell\times \ell$ equitable quotient matrix $\Pi(C')=(\pi'_{ab})$ of $C'$  with respect to $\Pi$ satisfying the following
(i)-(ii):
\begin{enumerate}
\item[(i)] $C[-|n)\leq C'[-|n)$ and
$\Pi(C')$ has row-sum vector  $\Pi(r')=$$(\pi(r')_1,$ $\pi(r')_2,$ $\ldots,$ $\pi(r')_\ell)^T$ with $\pi(r')_a=\max_{i\in \pi_a}r_i$ for $1\leq a\leq \ell$.
\item[(ii)] $\Pi(C')$ has a positive rooted eigenvector $\Pi(v')=$$(\pi(v')_1,$ $\pi(v')_2,$ $\ldots,$ $\pi(v')_\ell)^T$ for some nonnegative eigenvalue $\lambda'$.
\end{enumerate}
Then
\begin{equation}\label{e8.1}
\rho(C)\leq \lambda'.
\end{equation} Moreover, if $C$ is irreducible, then
$\rho(C)= \lambda'$ if and only if
\begin{enumerate}
\item[(a)] $r_i=\pi(r')_a$ \qquad for $1\leq a\leq \ell$ and $i\in \pi_a$, and
\item[(b)]  $c'_{ij}=c_{ij}$ \qquad for all $1\leq i, j\leq n$ such that for $1\leq b\leq \ell$ with $j\in \pi_b$ we have  $\pi(v')_b>\pi(v')_\ell$.
\end{enumerate}
\end{thm}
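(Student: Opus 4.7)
The plan is to deduce this theorem as a direct corollary of Theorem~\ref{t3.1}, after lifting the quotient-level eigenvector $\Pi(v')$ to an eigenvector of the full matrix $C'$. I would set things up so that every one of the hypotheses (i)--(iv) of Theorem~\ref{t3.1} follows from hypotheses (i)--(ii) of the present theorem together with Perron--Frobenius.

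First, let $S$ be the $n\times \ell$ characteristic matrix of $\Pi$ and set $v':=S\,\Pi(v')$. Since $\Pi(C')$ is the equitable quotient matrix of $C'$, Lemma~\ref{q1} says $v'$ is an eigenvector of $C'$ for $\lambda'$, and because $\Pi(v')$ is rooted with $n\in \pi_\ell$, the lifted vector $v'$ is rooted; in fact $\Pi(v')>0$ forces $v'>0$. This verifies assumption (ii) of Theorem~\ref{t3.1}. Assumption (iii) is free: $C$ is nonnegative, so Theorem~\ref{PF}(i) supplies a nonnegative left eigenvector $v^T$ for $\lambda:=\rho(C)$. Assumption (iv) is then immediate, since $v'>0$ and $v^T\geq 0$ with $v^T\ne 0$ imply $v^T v'>0$.

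For assumption (i) of Theorem~\ref{t3.1}, the inequality $C[-|n)\leq C'[-|n)$ is given; it remains to compare row sums. The equitable property of $\Pi(C')$ forces, for each $i\in \pi_a$, the row sum $r'_i$ of $C'$ to equal the row sum $\pi(r')_a$ of $\Pi(C')$, which by hypothesis equals $\max_{k\in \pi_a}r_k\geq r_i$. Hence the row-sum vector of $C'$ majorizes that of $C$. Applying Theorem~\ref{t3.1} yields $\rho(C)\leq \lambda'$.

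For the equality clause, note that when $C$ is irreducible, Theorem~\ref{PF} makes $v^T$ positive, so the restriction ``$v_i\ne 0$'' in conditions (a)--(b) of Theorem~\ref{t3.1} is vacuous. Moreover $v'_n=\pi(v')_\ell>0$, so (a) of Theorem~\ref{t3.1} becomes $r_i=r'_i=\pi(r')_a$ for every $i\in \pi_a$, which is exactly (a) of the present theorem. For (b), observe that $v'_j=\pi(v')_b$ whenever $j\in \pi_b$, so the strict inequality $v'_j>v'_n$ is equivalent to $\pi(v')_b>\pi(v')_\ell$; this automatically excludes the case $j=n\in \pi_\ell$, so the restriction $j\leq n-1$ in Theorem~\ref{t3.1}(b) matches the present condition on $j$ in (b). Thus Theorem~\ref{t3.1}(b) and (b) of the present theorem are the same condition. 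The main obstacle is simply the bookkeeping in this last step: carefully translating the index-wise equality conditions of Theorem~\ref{t3.1} through the lift $v'=S\,\Pi(v')$, so that ``$v_i\ne 0$ and $v'_j>v'_n$'' becomes ``$i$ arbitrary and $j\in \pi_b$ with $\pi(v')_b>\pi(v')_\ell$''.
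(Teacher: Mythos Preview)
Your proof is correct and follows essentially the same approach as the paper: lift $\Pi(v')$ to $v'=S\,\Pi(v')$ via Lemma~\ref{q1}, verify hypotheses (i)--(iv) of Theorem~\ref{t3.1} using Perron--Frobenius and the equitable row-sum identity $r'_i=\pi(r')_a$ for $i\in\pi_a$, and then translate the equality conditions through the block structure of $v'$. Your write-up is in fact slightly more explicit than the paper's in justifying the row-sum majorization and in handling the bookkeeping for the equality case.
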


\begin{proof} Let $S$ be the $n\times \ell$ characteristic matrix of $\Pi.$
 From the construction of $\Pi$ and $C'$, $r'=S\Pi(r')=(r'_1,\ldots,r'_n)^T$ is the row-sum vector of $C'$, and
 $v'=S\Pi(v')$ is a positive rooted eigenvector of $C'$ for $\lambda'$  by Lemma~\ref{q1}. Since $C$ is nonnegative, there exists  a nonnegative left eigenvector $v^T$ of $C$ for $\rho(C)$ by Theorem~\ref{PF}(i).
  Hence $v^Tv'>0$.
   Thus assumptions (i)-(iv) of Theorem~\ref{t3.1} hold, concluding $\rho(C)\leq \lambda'$.

   Suppose that $C$ is irreducible. Then the above $v$ is positive. Hence the equivalent condition (b) of $\rho(C)=\lambda'$ in Theorem~\ref{t3.1} becomes
  $c'_{ij}=c_{ij}$ for $1\leq i\leq n$,$1\leq j\leq n-1$ with $v'_j>v'_n$, and this is equivalent to the (b) here from the structure of  $v'=S\Pi(v')$.
  The equivalent condition (a) here is immediate from that in Theorem~\ref{t3.1} since $r'_i=\pi(r')_a$ for $i\in\pi_a$.
\end{proof}

Notice that the irreducible assumption of $C$ in the second part of Theorem~\ref{tt6.1} is not necessary.
The following example shows that this is a must in that of Theorem~\ref{t6.1}.

 \begin{exam}
 Consider the following two $3\times 3$ matrices
 $$C=\begin{pmatrix}
 0 & 3 & 0 \\
 1 &  1 & 0\\
 1 & 0 & 0
 \end{pmatrix},\quad   C'=\begin{pmatrix}
 0 & 3 & 0 \\
 1 &  1 & 0\\
 1 & 0 & 1
 \end{pmatrix}\quad  {\rm and~~ }\Pi(C')=\begin{pmatrix} 0 & 3\\ 1 & 1 \end{pmatrix}$$
 is the equitable quotient matrix of $C'$  with respect to the partition $\Pi=\{\{1\}, \{2, 3\}\}.$
  Note that $C[-|3)= C'[-|3)$, and the row-sum vector $(3, 2, 1)^T$ of $C$ is majorized by the row-sum vector $(3, 2, 2)^T$ of $C'$.
 Since $\Pi(C')+I$ is positive and rooted, $\Pi(C')$ has a positive rooted eigenvector for $\lambda'=\rho(\Pi(C'))=(1+\sqrt{13})/2.$
 Hence assumptions (i)-(ii) in Theorem~\ref{t6.1} hold. By direct computing, $\rho(C)=(1+\sqrt{13})/2$, so the the equality in
(\ref{e8.1}) holds.  However, $r_2=2\not=1=r_3$, a contradiction to (a) in Theorem \ref{t6.1}.
 This contradiction is because of the reducibility of $C$.
 \end{exam}

The following is a dual version of Theorem~\ref{t6.1}.

\begin{thm}\label{t6.2}
Let $C=(c_{ij})$ be a nonnegative $n\times n$ matrix with row-sum vector $(r_1,\ldots,r_n)^T$,  and
 $\Pi=\{\pi_1, \pi_2, \ldots, \pi_\ell\}$ a partition of $\{1, 2, \ldots, n\}$ with $n\in \pi_\ell$.
Let $C'$ be an $n\times n$ matrix that admits
  an $\ell\times \ell$ equitable quotient matrix $\Pi(C')=(\pi'_{ab})$ of $C'$  with respect to $\Pi$ satisfying the following
(i)-(ii):
\begin{enumerate}
\item[(i)] $C[-|n)\geq C'[-|n)$ and
$\Pi(C')$ has row-sum vector  $\Pi(r')=$$(\pi(r')_1,$ $\pi(r')_2,$ $\ldots,$ $\pi(r')_\ell)^T$ with $\pi(r')_a=\min_{i\in \pi_a}r_i$ for $1\leq a\leq \ell$.
\item[(ii)] $\Pi(C')$ has a positive rooted eigenvector $\Pi(v')=$$(\pi(v')_1,$ $\pi(v')_2,$ $\ldots,$ $\pi(v')_\ell)^T$ for some nonnegative eigenvalue $\lambda'$.
\end{enumerate}
Then
\begin{equation}
\rho(C)\geq \lambda'.
\end{equation} Moreover, if $C$ is irreducible then
$\rho(C)= \lambda'$ if and only if
\begin{enumerate}
\item[(a)] $r_i=\pi(r')_a$ \qquad for $1\leq a\leq \ell$ and $i\in \pi_a$, and
\item[(b)] $c'_{ij}=c_{ij}$ \qquad for all $1\leq i, j\leq n$ such that for $1\leq b\leq \ell$ with $j\in \pi_b$ we have  $\pi(v')_b>\pi(v')_\ell$.
\end{enumerate}\qed
\end{thm}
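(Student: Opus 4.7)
The plan is to mirror the proof of Theorem~\ref{t6.1} verbatim, swapping every application of the upper-bound tools for their dual lower-bound counterparts. Specifically, I will lift the equitable-quotient data on $\Pi(C')$ up to data on $C'$ itself and then invoke Theorem~\ref{t3.1'} with $P=I$ and $Q$ as in (\ref{defnq}).

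Let $S$ denote the $n\times\ell$ characteristic matrix of $\Pi$. First, by Lemma~\ref{q1}, the vector $v':=S\Pi(v')$ is an eigenvector of $C'$ for $\lambda'$; since $\Pi(v')$ is positive and rooted and $n\in\pi_\ell$, the vector $v'$ is itself positive and rooted. Similarly $r':=S\Pi(r')$ is the row-sum vector of $C'$, with $r'_i=\pi(r')_a$ whenever $i\in\pi_a$. The hypothesis $\pi(r')_a=\min_{i\in\pi_a}r_i$ then gives $r_i\geq r'_i$ for all $i$, so the row-sum vector of $C$ majorizes that of $C'$; combined with $C[-|n)\geq C'[-|n)$ this establishes assumption~(i) of Theorem~\ref{t3.1'}. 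Assumption~(ii) holds with the rooted eigenvector $v'$ just constructed. Since $C$ is nonnegative, Theorem~\ref{PF}(i) supplies a nonnegative left eigenvector $v^T$ of $C$ for $\lambda:=\rho(C)$, giving assumption~(iii). Finally $v'>0$ together with $v\geq 0$, $v\neq 0$ yields $v^Tv'>0$, which is assumption~(iv). Theorem~\ref{t3.1'} then delivers $\rho(C)\geq\lambda'$.

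For the equality clause, assume $C$ is irreducible, so by Theorem~\ref{PF} the left eigenvector $v$ may be taken to be positive. By the ``moreover'' part of Theorem~\ref{t3.1'}, $\rho(C)=\lambda'$ is equivalent to the conjunction of $r_i=r'_i$ for all $i$ (since $v'_n=\pi(v')_\ell>0$) and $c'_{ij}=c_{ij}$ for all $1\leq i\leq n$, $1\leq j\leq n-1$ with $v'_j>v'_n$. The first condition becomes (a) because $r'_i=\pi(r')_a$ for $i\in\pi_a$. For the second, the structure $v'=S\Pi(v')$ translates the strict inequality $v'_j>v'_n$ with $j\in\pi_b$ into $\pi(v')_b>\pi(v')_\ell$; the borderline case $j=n\in\pi_\ell$ gives $\pi(v')_b=\pi(v')_\ell$ and is thus automatically excluded, which is why the index $j$ in~(b) can be written over the full range $1\leq j\leq n$. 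This recovers~(b) and completes the characterization.

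The only step requiring any care is this last bookkeeping translation between the entrywise and the quotient-level conditions; everything else is strictly dual to the proof of Theorem~\ref{t6.1}, so no new technical obstacle arises.
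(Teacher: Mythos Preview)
Your proof is correct and follows essentially the same approach as the paper: the paper presents Theorem~\ref{t6.2} as the dual of Theorem~\ref{t6.1} and omits the proof, so the intended argument is precisely to lift $\Pi(v')$ and $\Pi(r')$ to $C'$ via the characteristic matrix $S$ and then invoke Theorem~\ref{t3.1'} in place of Theorem~\ref{t3.1}. Your additional remark explaining why the index $j=n$ is automatically excluded (so that the range in~(b) can be written as $1\leq j\leq n$) is a helpful clarification that the paper leaves implicit.
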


\begin{rem}
\begin{enumerate}
\item[(i)] The positive assumption of $\Pi(v')$ in (ii)  of  Theorem~\ref{t6.2} can be removed in concluding the first part $\rho(C)\geq \lambda'$.
The following is a proof:

{\it Proof.} From (i) and referring to (\ref{C'Q}), we have $CQ\geq C'Q\geq 0$.
Let $v'=S\Pi(v')$ be a rooted eigenvector of $C'$ for $\lambda'$ as shown in the above proof.
Then $u=Q^{-1}v'$ is nonnegative by Lemma~\ref{l3.15}(i), so
$Cv'=CQu\geq C'Qu= C'v'=\lambda'v'.$
Since $v'$ is nonnegative,  $\rho(C)\geq \lambda'$ by Theorem~\ref{PF}(iii). \qed

\item[(ii)] The following counterexample shows that to conclude $\rho(C)\leq \lambda'$, the positive assumption of $\Pi(v')$ in (ii)  of  Theorem~\ref{t6.1} can not  be removed:
    $$C=C'=\begin{pmatrix}1& 2\\ 0 & 2\end{pmatrix}, \lambda'=1, v'=\begin{pmatrix} 1\\ 0\end{pmatrix}, \rho(C)=2, v^T=(0, 1),$$
    where the trivial partition $\Pi=\{\{1\}, \{2\}\}$ of $\{1, 2\}$ is adopted.
\end{enumerate}
\end{rem}

We provide an example in applying Theorem~\ref{t6.1}.

\begin{exam}\label{exammain}
Consider the following two $7\times 7$ matrices $C$ and $C'$ expressed below under the partition $\Pi=\{\{1, 2, 3\}, \{4, 5\}, \{6, 7\}\}:$
\begin{equation}\label{e8.6}C=\left(
\begin{tabular}{ccc|cc|cc} 2 & 1 & 3 & 3 & 3 & 12 & 0\\ 4 & 2 & 1 & 4 & 2 & 6 & 4\\ 2 & 3 & 1 & 4 & 1 & 8 &3 \\
\hline
3 & 5 & 3 & 1 & 1& 3 & 4\\ 5 & 6 & 1 & 1 & 0 & 3 &3\\
\hline
0 & 2 & 1 & 2 & 2 & 6 & 0\\  2 & 2 & 0 & 2 &1& 1 &4\end{tabular}\right), \qquad
  C'=\left(\begin{tabular}{ccc|cc|cc} 2 & 2 & 3 & 3 & 3 & 12 & -1\\ 4 & 2 & 1 & 4 & 2 & 6 & 5\\ 2 & 3 & 2 & 4 & 2 & 8 & 3 \\ \hline
 4& 5 & 3 & 1 & 1& 3 & 3\\
 5 & 6 & 1 & 1 & 1 & 3 &3\\
 \hline 1 & 2 & 1 & 2 & 2 & 6 & -1\\  2 & 2 & 0 & 2 &2& 1 &4\end{tabular}\right).\end{equation}
Apparently, $C[-|7)\leq C'[-|7),$ and
 the row-sum vector
$(24, 23, 22, 20,$ $19, 13, 12)^T$ of $C$ is majorized by the  row-sum vector $(24, 24, 24, 20, 20, 13, 13)^T$ of $C'$,
so assumption (i)  of Theorem~\ref{t6.1} holds.
Note that $C'$ is not rooted  and neither of its translates.
Since  $C'$ has equitable quotient matrix
$$\Pi(C')=\begin{pmatrix} 7 & 6 & 11 \\ 12 & 2 & 6\\ 4& 4 & 5\end{pmatrix},$$
in which $\Pi(C')+2I$ is rooted, so assumption (ii) of Theorem~\ref{t6.1} holds for $\lambda'=\rho_r(C')$ by Lemma~\ref{l4.1} and Lemma~\ref{l5.3}(i).
Hence by Theorem~\ref{t6.1}, $\rho(C)\leq \rho_r(\Pi(C'))\approx 18.6936.$

If applying Lemma~\ref{lem1.2} by constructing the following nonnegative matrix $C''$ that majors $C$, and find its equitable quotient matrix $\Pi(C'')$ with respect to the above partition $\Pi$:
$$C''=\left(
\begin{tabular}{ccc|cc|cc} 2 & 2 & 3 & 3 & 3 & 12 & 0\\ 4 & 2 & 1 & 4 & 2 & 6 & 6\\ 2 & 3 & 2 & 4 & 2 & 8 &4 \\
\hline
4 & 5 & 3 & 1 & 1& 3 & 4\\ 5 & 6 & 1 & 1 & 1 & 3 &4\\
\hline
 1& 2 & 1 & 2 & 2 & 6 & 0\\  2 & 2 & 0 & 2 &2& 2 &4\end{tabular}\right), \qquad \Pi(C'')=\begin{pmatrix} 7 & 6 & 12 \\ 12 & 2& 7\\ 4& 4 & 6\end{pmatrix},$$
 one will find the upper bound
$$\rho(C'')=\rho (\Pi(C'')\approx 19.4$$
of $\rho(C)$ which is larger than the previous one.
\end{exam}

\section{More irrelevant columns}

Considering the part $\pi_\ell$ of column indices of $C$ and $C'$ in the assumption (i) of Theorem~\ref{t6.1},
the assumption $C[-|\pi_\ell]\leq C'[-|\pi_\ell]$ for $C'$ is not really necessary. We might replace the columns indexed by
$\pi_\ell$ in $C'$ by any other columns and adjust the values in the last column of keeping the row-sums of $C'$ unchanged.
In this situation, the columns of $C'$ indexed by $\pi_\ell$ are irrelevant columns (in the comparison of $C$ and $C'$).
For example in Example~\ref{exammain},
the values in the  $6$-th column of $C'$ can be changed to any values (e.g., $(a, b, c, d, e, f, g)^T$), if the values in the $7$-th column of $C'$ make the corresponding change (e.g., $(11-a, 11-b, 11-c, 6-d, 6-e, 5-f, 5-g)^T$ correspondingly), i.e., columns $6$ and $7$ of $C'$ are irrelevant.  The following theorem generalizes this idea when restricting $\Pi[C']$ in Theorem~\ref{t6.1} to be a rooted matrix or its translate.

\begin{thm}\label{main3}
Let  $\Pi=\{\pi_1, \pi_2, \ldots, \pi_\ell\}$ be a partition of $[n]$ with $n\in \pi_\ell$, and
$C$ be an $n\times n$ nonnegative matrix with row-sums $r_1\geq r_2\geq \cdots \geq r_n$.
For $1\leq a \leq \ell$ and $1\leq b\leq \ell-1,$
choose  $r''_a,$ $c''_{ab}$ such that
$$
\left\{
\begin{array}{rcl}
r''_a&=&\max_{i\in \pi_a} r_i;\\
c''_{ab}   &\geq&\sum_{j\in \pi_b} c_{ij}\qquad \hbox{for all } i\in \pi_a;\\
c''_{ab}   &\geq& c''_{\ell b}> 0 \qquad \hbox{for } a\not=b
\end{array}
\right.$$
and let
$$c''_{a\ell}=r''_a-\sum_{j=1}^{\ell-1} c''_{aj}.$$
Then the $\ell\times\ell$ matrix $C''=(c''_{ab})$  has a positive rooted eigenvector $v''=(v''_1, v''_2, \ldots, v''_\ell)^T$ for $\rho_r(C'')$,
and  $\rho(C)\leq \rho_r(C'').$
Moreover, if $C$ is irreducible, then $\rho(C)= \rho_r(C'')$ if and only if
\begin{enumerate}
\item[(a)] $r_i=r''_a$ \qquad for $1\leq a\leq \ell$ and $i\in \pi_a$, and
\item[(b)]
$\sum_{j\in \pi_b}c_{ij} =c''_{ab}\qquad \hbox{for all $1\leq a, b\leq \ell$ with $v''_b>v''_\ell$ and  $i\in \pi_a$}.$
\end{enumerate}
\end{thm}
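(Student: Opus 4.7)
The plan is to lift $C''$ to an $n\times n$ matrix $C'$ whose equitable quotient with respect to $\Pi$ equals $C''$, and then invoke Theorem~\ref{t6.1}. I must first verify the first conclusion of the theorem, namely that $C''$ carries a positive rooted eigenvector for $\rho_r(C'')$. The hypothesis $c''_{ab}\geq c''_{\ell b}>0$ for $a\neq b$ makes the off-diagonal entries of each of the first $\ell-1$ columns of $C''$ satisfy the rooted inequality, while the labeling $r_1\geq\cdots\geq r_n$ with $n\in\pi_\ell$ (parts indexed so that $r''_a\geq r''_\ell$ for $a\leq\ell-1$) makes the row-sum vector rooted. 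Choose $t\geq 0$ large enough that $c''_{bb}+t\geq c''_{\ell b}$ for every $b\leq\ell-1$ and $r''_\ell+t\geq 0$. Then $C''+tI$ is a rooted matrix in the sense of Definition~\ref{def4.1}, and its last row off the diagonal $(c''_{\ell 1},\ldots,c''_{\ell,\ell-1})$ is positive, so Lemma~\ref{l5.3}(i) produces a positive rooted eigenvector $v''$ of $C''+tI$ for $\rho(C''+tI)$. By Lemma~\ref{l4.1}, the same $v''$ is a positive rooted eigenvector of $C''$ for $\rho_r(C'')=\rho(C''+tI)-t$, and Lemma~\ref{lem1.1} applied to the nonnegative matrix $C''+tI$ gives $\rho_r(C'')\geq r''_\ell\geq 0$, which is the nonnegativity of $\lambda'$ needed by Theorem~\ref{t6.1}(ii).

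Next I would construct $C'$ block-by-block. For each $i\in\pi_a$ and each $b\leq\ell-1$, the bound $c''_{ab}\geq\sum_{j\in\pi_b}c_{ij}$ lets me distribute the row-block so that $c'_{ij}\geq c_{ij}$ and $\sum_{j\in\pi_b}c'_{ij}=c''_{ab}$ (keep $c'_{ij}=c_{ij}$ except at one fixed column of $\pi_b$ absorbing the slack). For $j\in\pi_\ell\setminus\{n\}$ set $c'_{ij}=c_{ij}$, and let $c'_{in}$ be determined by the row-sum constraint $\sum_j c'_{ij}=r''_a$. Then $\sum_{j\in\pi_b}c'_{ij}=c''_{ab}$ holds for all $b$ and all $i\in\pi_a$, so $\Pi(C')=C''$ is the equitable quotient; the quotient row-sum vector is $(r''_1,\ldots,r''_\ell)^T=(\max_{i\in\pi_1}r_i,\ldots,\max_{i\in\pi_\ell}r_i)^T$; and $C[-|n)\leq C'[-|n)$. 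Hypotheses (i) and (ii) of Theorem~\ref{t6.1} now both hold with $\lambda'=\rho_r(C'')$, yielding $\rho(C)\leq\rho_r(C'')$.

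For the equality case under irreducibility of $C$, I would translate the equality conditions of Theorem~\ref{t6.1} through the characteristic matrix $S$ of $\Pi$. Since the eigenvector appearing in Theorem~\ref{t6.1} is $v'=Sv''$, one has $v'_j=v''_b$ for $j\in\pi_b$ and $v'_n=v''_\ell$, so $v'_j>v'_n$ iff $v''_b>v''_\ell$ when $j\in\pi_b$. Condition~(a) transfers verbatim; summing the entrywise equality $c'_{ij}=c_{ij}$ from Theorem~\ref{t6.1}(b) over $j\in\pi_b$ yields $c''_{ab}=\sum_{j\in\pi_b}c_{ij}$, which is (b) here. The converse is obtained by noting that given (a) and (b) here, the block-by-block construction of $C'$ above may be arranged so that $c'_{ij}=c_{ij}$ precisely in the relevant blocks, forcing Theorem~\ref{t6.1}'s equality. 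The main technical obstacle is the very first step: producing a positive rooted eigenvector for a matrix $C''$ that is neither a priori nonnegative nor obviously rooted, where success rests on the shift $tI$ simultaneously fixing column rootedness while the indexing of parts keeps the row-sum vector rooted.
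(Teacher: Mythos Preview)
Your approach is essentially the same as the paper's: shift $C''$ by a multiple of the identity to make it a rooted matrix, invoke Lemma~\ref{l4.1} and Lemma~\ref{l5.3}(i) to produce the positive rooted eigenvector, build an $n\times n$ matrix $C'$ with equitable quotient $C''$ satisfying the hypotheses of Theorem~\ref{t6.1}, and read off the conclusion. The paper's proof is much terser (it simply gestures at Example~\ref{exammain} for the construction of $C'$), while you spell out the block-by-block construction and the translation of the equality conditions through $S$; this added detail is correct and matches the paper's intent.

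One small slip: when you appeal to Lemma~\ref{lem1.1} to get $\rho_r(C'')\geq r''_\ell\geq 0$, you call $C''+tI$ ``nonnegative,'' but the off-diagonal entries $c''_{a\ell}$ in the last column are unaffected by adding $tI$ and may well be negative. The desired inequality still holds---for instance because $Q^{-1}(C''+tI)Q$ is nonnegative with $(\ell,\ell)$-entry $r''_\ell+t$, so its spectral radius is at least $r''_\ell+t$---and in any case the nonnegativity hypothesis on $\lambda'$ in Theorem~\ref{t6.1}(ii) is not actually used in that theorem's proof (which goes through Theorem~\ref{t3.1}, requiring only $\lambda'\in\mathbb{R}$). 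So this is a cosmetic point. Your parenthetical ``parts indexed so that $r''_a\geq r''_\ell$'' is an implicit assumption that the paper's proof also silently makes when asserting $C''+dI$ is rooted; neither proof derives it from the stated hypotheses.
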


\begin{proof} From the construction of $C''$,
$C''+dI$ is a positive rooted matrix for some $d$ large enough, so $C''$ has a positive rooted eigenvector for $\rho_r(C'')$ by Lemma~\ref{l4.1} and Lemma~\ref{l5.3}(i).
In view of the construction of $C'$ in Example~\ref{exammain}, we construct an $n\times n$ matrix $C'$ such that $C'$ has the equitable quotient matrix $\Pi(C')=C''$ and assumptions (i)-(ii) of Theorem \ref{t6.1} hold for $\lambda'=\rho_r(\Pi(C'))$.
Hence the remaining follows from the conclusion of Theorem \ref{t6.1}.
\end{proof}

\begin{rem}
Theorem~\ref{dz2013} is a special case  of Theorem~\ref{main3} with $\Pi=\{\{1\}, \{2\}, \ldots, \{\ell-1\}, \{\ell, \ell+1, \ldots, n\}\}$ and $C''=M_{\ell}(d,f,r_1,r_2,\ldots,r_\ell)$ as shown in (\ref{ne1.2}). The equality case needs to apply Lemma~\ref{l7.1}(iii) by choosing a new $\ell$ to be the least $t$ such that $r_t=r_\ell$.
By using the more irrelevant columns idea in Theorem~\ref{main3}, the assumption  $f:=\max_{1\leq i\not=j\leq n} c_{ij}$ and $d:=\max_{1\leq i\leq n}c_{ii}$ in Theorem~\ref{dz2013} can be replaced by the possible smaller number $f:=\max_{1\leq i\leq n,1\leq j\leq \ell-1,i\ne j} c_{ij}$ and $d:=\max_{1\leq i\leq \ell-1} c_{ii}$ respectively.
\end{rem}

The following is a dual theorem of Theorem~\ref{main3}.
\begin{thm}\label{main4}
Let  $\Pi=\{\pi_1, \pi_2, \ldots, \pi_\ell\}$ be a partition of $[n]$ with $n\in \pi_\ell$, and
$C$ an $n\times n$ nonnegative matrix with row-sums $r_1\geq r_2\geq \cdots \geq r_n$.
For $1\leq a \leq \ell$ and $1\leq b\leq \ell-1,$
choose  $r''_a,$ $c''_{ab}$ such that
\begin{equation}\label{e9.1}
\left\{
\begin{array}{rcl}
r''_a&=&\min_{i\in \pi_a} r_i;\\
c''_{ab}   &\leq&\sum_{j\in \pi_b} c_{ij}\qquad \hbox{for all } i\in \pi_a;\\
c''_{ab}   &\geq& c''_{\ell b}> 0 \qquad \hbox{for } a\not=b
\end{array}
\right.\end{equation}
and let
\begin{equation}\label{e9.2}
c''_{a\ell}=r''_a-\sum_{j=1}^{\ell-1} c''_{aj}.
\end{equation}
Then the $\ell\times\ell$ matrix $C''=(c''_{ab})$  has a positive rooted eigenvector $v''=(v''_1, v''_2, \ldots, v''_\ell)^T$ for $\rho(C'')$,
and
$\rho(C)\geq \rho(C'').$
Moreover, if $C$ is irreducible, then $\rho(C)= \rho(C'')$ if and only if
\begin{enumerate}
\item[(a)] $r_i=r''_a$ \qquad for $1\leq a\leq \ell$ and $i\in \pi_a$, and
\item[(b)]
$\sum_{j\in \pi_b}c_{ij} =c''_{ab}\qquad \hbox{for all $1\leq a, b\leq \ell$ with $v''_b>v''_\ell$ and  $i\in \pi_a$}.$
\end{enumerate}
\end{thm}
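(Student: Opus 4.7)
The plan is to mirror the proof of Theorem~\ref{main3}, invoking the dual Theorem~\ref{t6.2} in place of Theorem~\ref{t6.1}. First, I would establish that $C''$ admits a positive rooted eigenvector for $\rho(C'')$. The hypotheses $c''_{ab}\ge c''_{\ell b}>0$ for $a\ne b$ with $b\le\ell-1$, together with the fact that row $a$ of $C''$ has row-sum $r''_a=\min_{i\in\pi_a}r_i\ge r_n=r''_\ell$ (because $n\in\pi_\ell$), imply that $C''+dI$ is a positive rooted matrix for any sufficiently large $d$. Lemma~\ref{l5.3}(i) then supplies a positive rooted eigenvector $v''$ of $C''+dI$ for $\rho(C''+dI)$, and by Lemma~\ref{l4.1} this same $v''$ is a positive rooted eigenvector of $C''$ for the largest real eigenvalue $\rho(C''+dI)-d$, which is $\rho(C'')$.

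Second, I would lift $C''$ to an $n\times n$ matrix $C'$ satisfying the hypotheses of Theorem~\ref{t6.2}. Following the template of Example~\ref{exammain}, for each $i\in\pi_a$ and $j\in\pi_b$ with $b\le\ell-1$, choose $0\le c'_{ij}\le c_{ij}$ with $\sum_{j\in\pi_b}c'_{ij}=c''_{ab}$; the slack $c''_{ab}\le\sum_{j\in\pi_b}c_{ij}$ from (\ref{e9.1}) makes this possible, for example by proportionally scaling the $c_{ij}$'s within the block. For $j\in\pi_\ell\setminus\{n\}$, pick any $0\le c'_{ij}\le c_{ij}$, and then define $c'_{in}$ so that $\sum_{j\in\pi_\ell}c'_{ij}=c''_{a\ell}$. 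By construction, $\Pi(C')=C''$ is an equitable quotient of $C'$, $C[-|n)\ge C'[-|n)$ entrywise, and row $i\in\pi_a$ of $C'$ has row-sum $r''_a\le r_i$, so the row-sum vector of $C$ majorizes that of $C'$. Hence assumption (i) of Theorem~\ref{t6.2} holds, and (ii) was established above with $\Pi(v')=v''$.

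Applying Theorem~\ref{t6.2} then gives $\rho(C)\ge\rho(C'')$. Its equality clause, under irreducibility of $C$, translates directly to (a) and (b) of Theorem~\ref{main4}: part (a) is literally the same, and part (b) follows by summing the equalities $c'_{ij}=c_{ij}$---which hold precisely when the corresponding block component of the lifted eigenvector $v'=S\Pi(v'')$ strictly exceeds $v'_n$, i.e.\ when $v''_b>v''_\ell$---over $j\in\pi_b$, yielding $\sum_{j\in\pi_b}c_{ij}=\sum_{j\in\pi_b}c'_{ij}=c''_{ab}$. The main technical point is engineering the lift $C'$: we must simultaneously realize $C''$ as the equitable quotient of $C'$ and respect $c'_{ij}\le c_{ij}$ on every column except column $n$. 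It is precisely the slack $c''_{ab}\le\sum_{j\in\pi_b}c_{ij}$ that makes both demands compatible.
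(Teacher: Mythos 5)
Your proof is correct and follows essentially the same route as the paper's: Theorem~\ref{main4} is left as the dual of Theorem~\ref{main3}, whose proof is exactly your argument — make $C''+dI$ positive rooted, lift $C''$ to an $n\times n$ matrix $C'$ with equitable quotient $\Pi(C')=C''$ as in Example~\ref{exammain}, and invoke Theorem~\ref{t6.2} in place of Theorem~\ref{t6.1}. The only cosmetic slip is the extra requirement $0\le c'_{ij}$ in your lift, which could fail on a diagonal block if $c''_{bb}<0$ (the hypotheses only force positivity of $c''_{ab}$ for $a\ne b$); since Theorem~\ref{t6.2} does not require $C'$ to be nonnegative, simply drop that lower bound.
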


\section{Some new lower bounds of spectral radius}

We shall apply Theorem~\ref{main4} to obtain a lower bound of $\rho(C)$ for a nonnegative matrix $C$.

\begin{thm}\label{main5}
Let
$C=(c_{ij})$ be an $n\times n$ nonnegative matrix with row-sums $r_1\geq r_2\geq \cdots \geq r_n$.
For $1\leq t< n,$ let $\Pi_t=\{\{1, \ldots, t\}, \{t+1, \ldots, n\}\}$ be a partition of $[n]$.
Let $d=\max_{t<i\leq n}c_{ii}$ and $f=\max_{1\leq i\leq n, t<j\leq n, i\not=j}c_{ij}.$
Assume that $0<r_n-(n-t-1)f-d\leq r_t-(n-t)f.$
Then
\begin{equation}\label{ne9.1}\rho(C)\geq \frac{r_t-f+d+\sqrt{(r_t-(2n-2t-1)f-d)^2+4(n-t)f(r_n-(n-t-1)f-d)}}{2}.\end{equation}
Moreover, if $C$ is irreducible and $f>0$, then the equality holds in (\ref{ne9.1}) if and only if $r_1=r_n$ or
\begin{enumerate}
\item[(a)] $r_1=r_t$ and $r_{t+1}=r_n$, and
\item[(b)]
$\sum_{j\in [t]}c_{ij} =r_t-(n-t)f\quad \hbox{for all  $i\in [t]$},$ and

$\sum_{j\in [t]}c_{ij} =r_n-(n-t-1)f-d\quad \hbox{for all  $t<i\leq n$}$.
\end{enumerate}
\end{thm}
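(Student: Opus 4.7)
The plan is to apply Theorem~\ref{main4} to the $2$-part partition $\Pi_t=\{[t],\{t+1,\ldots,n\}\}$ of $[n]$, so $\ell=2$. Taking $r''_1=r_t$, $r''_2=r_n$ and the largest admissible ``first-column'' entries
\[
c''_{11}=r_t-(n-t)f,\qquad c''_{21}=r_n-(n-t-1)f-d,
\]
equation (\ref{e9.2}) forces $c''_{12}=(n-t)f$ and $c''_{22}=(n-t-1)f+d$, so the $2\times 2$ matrix $C''$ has row sums $r_t$ and $r_n$.

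First I would verify the hypotheses (\ref{e9.1}) of Theorem~\ref{main4}. The minimality $r''_a=\min_{i\in\pi_a}r_i$ is immediate from $r_1\geq\cdots\geq r_n$. For $i\in[t]$, every entry $c_{ij}$ with $j>t$ is off-diagonal and so bounded by $f$, so $\sum_{j\in[t]}c_{ij}=r_i-\sum_{j>t}c_{ij}\geq r_i-(n-t)f\geq c''_{11}$; for $i>t$, singling out the diagonal entry $c_{ii}\leq d$ gives $\sum_{j\in[t]}c_{ij}\geq r_i-d-(n-t-1)f\geq c''_{21}$. The chain $c''_{11}\geq c''_{21}>0$ is precisely the standing hypothesis $0<r_n-(n-t-1)f-d\leq r_t-(n-t)f$. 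Theorem~\ref{main4} then yields $\rho(C)\geq\rho(C'')$ (nonnegativity of $C''$ identifies $\rho_r(C'')$ with $\rho(C'')$).

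Next I would read off $\rho(C'')$ from its $2\times 2$ characteristic polynomial, whose trace is $r_t-f+d$ and determinant is $r_t((n-t-1)f+d)-(n-t)f\,r_n$. A routine expansion, matching coefficients of $r_t^2$, $r_t$, $f^2$, $fd$, $d^2$, $r_n$, shows that the discriminant of this quadratic simplifies to
\[
(r_t-(2n-2t-1)f-d)^2+4(n-t)f(r_n-(n-t-1)f-d),
\]
reproducing the bound (\ref{ne9.1}).

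For the equality statement I would split into two cases. If $r_t=r_n$, then $C''$ has constant row sum $r_n$, giving $\rho(C'')=r_n$; combined with the irreducibility of $C$ and Lemma~\ref{lem1.1}, $\rho(C)=r_n$ iff $r_1=r_n$, which is the first clause. If $r_t>r_n$, then $C''$ is rooted with $C''[2|2)=(c''_{21})$ positive, so Lemma~\ref{l5.3}(ii) gives $v''_1>v''_2$. The equality clause of Theorem~\ref{main4}, applied to the only index $b=1$ with $v''_b>v''_\ell$, then translates directly into (a) $r_i=r_t$ for $i\in[t]$ and $r_i=r_n$ for $i>t$, together with (b) $\sum_{j\in[t]}c_{ij}=c''_{a1}$ for all $i\in\pi_a$ and $a\in\{1,2\}$, which is precisely (a) and (b) of the theorem; the hypothesis $f>0$ forces $c''_{12}>0$ and thus the structural non-degeneracy that makes the two cases exhaustive. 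The main technical obstacle is the algebraic discriminant identification, together with the careful matching of the ``$r_t=r_n$'' and ``$r_t>r_n$'' cases against the ``$r_1=r_n$ or ((a) and (b))'' dichotomy in the statement.
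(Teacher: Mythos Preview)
Your proposal is correct and follows essentially the same route as the paper: you apply Theorem~\ref{main4} with the same $2\times 2$ matrix $C''$, verify the hypotheses (\ref{e9.1})--(\ref{e9.2}) (in more detail than the paper bothers to), and identify $\rho(C'')$ with the right-hand side of (\ref{ne9.1}) via the standard $(\alpha-\delta)^2+4\beta\gamma$ discriminant form. For the equality discussion the paper argues by distinguishing $v''_1>v''_2$ from $v''_1=v''_2$ (using irreducibility of $C''$ and Theorem~\ref{PF}), whereas you equivalently split on $r_t>r_n$ versus $r_t=r_n$ and invoke Lemma~\ref{l5.3}(ii) and Lemma~\ref{lem1.1} respectively; these two organizations are interchangeable and both are sound.
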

\begin{proof} The lower bound of $\rho(C)$ in (\ref{ne9.1}) follows by
applying Theorem~\ref{main4} with the following positive rooted matrix
$$C''=\begin{pmatrix}
 r_t-(n-t)f & (n-t)f \\
 r_n-(n-t-1)f-d& (n-t-1)f+d
 \end{pmatrix},$$
 which has row-sum vector $(r_t, r_n)^T$ and the assumptions in (\ref{e9.1}) and (\ref{e9.2}) of Theorem~\ref{main4} hold from the assumptions.
 Note that $C''$ has a positive rooted eigenvector  $(v''_1, v''_2)^T$
 for $\rho(C'')$ by  Lemma~\ref{l5.3}(i), and the value
  $\rho(C'')$ is as shown  in  the right of (\ref{ne9.1}).
To study the equality case in (\ref{ne9.1}), we apply conditions (a)-(b) in Theorem~\ref{main4},
in which (a) is exactly the (a) of this theorem.
If $v''_1>v''_2$ then the condition (b) of this theorem is exactly the (b) of Theorem~\ref{main4}.
Notice that  $v''_2=v''_1$ if and only if  $\rho(C'')=r_t=r_n$ by Theorem \ref{PF} 
using the irreducible property of  $C''$. This is also equivalent to  $r_1=r_n$ under the condition (a).
\end{proof}

The following corollary restricts Theorem~\ref{main5} to binary matrix $C$.
\begin{cor}\label{cor10.2}
Let
$C=(c_{ij})$ be an $n\times n$ (0, 1) matrix with row-sums $r_1\geq r_2\geq \cdots \geq r_n>0$,
and choose $t\geq n-r_n+1$ and $t\leq n$. Then
\begin{equation}\label{ne9.2}
\rho(C)\geq \frac{r_t+\sqrt{r_t^2-4(r_n-1)(r_t-r_n)}}{2}.\end{equation}
Moreover, if $C$ is irreducible, then equality holds in (\ref{ne9.2}) if and only if $r_1=r_n$ or
\begin{enumerate}
\item[(a)] $r_1=r_t$ and $r_{t+1}=r_n$, and
\item[(b)]
$\sum_{j\in [t]}c_{ij} =r_t-(n-t)\quad \hbox{for all  $i\in [t]$},$ and

$\sum_{j\in [t]}c_{ij} =r_n-(n-t)\quad \hbox{for all  $t<i\leq n$}$.
\end{enumerate}\qed
\end{cor}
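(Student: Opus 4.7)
The proof is a direct specialization of Theorem~\ref{main5} to the $(0,1)$ setting. The plan is to take $f=1$ and $d=1$, which for a binary matrix $C$ are legitimate upper bounds on the off-diagonal entries in columns $t+1,\ldots,n$ and on the diagonal entries in rows $t+1,\ldots,n$; since the proof of Theorem~\ref{main5} (via Theorem~\ref{main4}) only uses $c_{ij}\leq f$ and $c_{ii}\leq d$, this substitution is permissible. Under this choice the hypothesis $0<r_n-(n-t-1)f-d\leq r_t-(n-t)f$ simplifies to $0<r_n-(n-t)\leq r_t-(n-t)$: the left strict inequality is exactly $t\geq n-r_n+1$, while the right one follows from $r_t\geq r_n$. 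Thus Theorem~\ref{main5} applies.

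Substituting $f=d=1$ into \eqref{ne9.1} and expanding
\[
(r_t-2(n-t))^2+4(n-t)(r_n-(n-t))=r_t^2-4(n-t)(r_t-r_n),
\]
one obtains
\[
\rho(C)\;\geq\;\frac{r_t+\sqrt{r_t^2-4(n-t)(r_t-r_n)}}{2}.
\]
The hypothesis $n-t\leq r_n-1$ combined with $r_t-r_n\geq 0$ yields $4(n-t)(r_t-r_n)\leq 4(r_n-1)(r_t-r_n)$, so the radicand above dominates that in \eqref{ne9.2}, and monotonicity of the square root delivers \eqref{ne9.2}.

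For the equality characterization under irreducibility, I would trace back the two inequalities just used. Theorem~\ref{main5}'s equality conditions, with $f=d=1$, read verbatim as conditions (a) and (b) of this corollary, along with the alternative $r_1=r_n$. The subsequent discriminant relaxation is tight precisely when $r_t=r_n$ or $n-t=r_n-1$. Combining these, the case $r_t=r_n$ together with (a) forces $r_1=r_t=r_n$, which collapses into the first alternative; the remaining case $n-t=r_n-1$ with $r_t>r_n$ yields the corollary's (a) and (b) directly from Theorem~\ref{main5}.

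The main obstacle is precisely this combined equality analysis: \eqref{ne9.2} involves two independent sources of slack---Theorem~\ref{main5}'s bound and the discriminant relaxation from $n-t$ to $r_n-1$---which must be simultaneously tight. The delicate point is recognizing that the $r_t=r_n$ branch of the discriminant equality is absorbed by condition (a) into the first alternative $r_1=r_n$, so that the two stated alternatives are exhaustive.
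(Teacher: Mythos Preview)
Your approach matches the paper's: both apply Theorem~\ref{main5} with $d=f=1$. Two points deserve comment.

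First, a small gap: Theorem~\ref{main5} is stated for $1\le t<n$, while the corollary allows $t=n$. The paper handles $t=n$ separately via Lemma~\ref{lem1.1} (the bound collapses to $\rho(C)\ge r_n$). You should too.

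Second, and more interestingly, you have been more careful than the paper and in doing so have uncovered a discrepancy. Substituting $f=d=1$ into \eqref{ne9.1} gives, as you computed,
\[
\rho(C)\ \ge\ \frac{r_t+\sqrt{\,r_t^2-4(n-t)(r_t-r_n)\,}}{2},
\]
with $(n-t)$ rather than $(r_n-1)$ in the discriminant. The paper's proof simply asserts that \eqref{ne9.1} yields the corollary, glossing over this. Your relaxation $n-t\le r_n-1$ does recover \eqref{ne9.2} as stated, but then, as your own equality analysis shows, tightness of \eqref{ne9.2} additionally forces $r_t=r_n$ or $n-t=r_n-1$. The corollary's listed equality conditions (a)--(b) omit the constraint $n-t=r_n-1$, yet without it (and with $r_t>r_n$) the relaxation is strict and equality fails. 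So either the intended bound has $(n-t)$ in place of $(r_n-1)$---in which case the equality conditions match Theorem~\ref{main5} verbatim and the paper's one-line proof is complete---or the stated equality characterization is missing the clause $n-t=r_n-1$. Your argument is correct for the inequality; for the equality part you have correctly identified the obstruction but should state explicitly that the ``if'' direction of the printed equivalence does not go through without that extra clause.
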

\begin{proof}
If $t=n$ then (\ref{ne9.2}) becomes $\rho(C)\geq r_n$, so the corollary follows   from Lemma~\ref{lem1.1}.
Assume $t<n$. Since the assumptions in Theorem~\ref{main5} clearly holds with $d=f=1,$ the corollary also holds by \eqref{ne9.1} in this case.
\end{proof}

One can easily check that the right of (\ref{ne9.2}) is at least $r_n$ (with equality iff $r_t=r_n$), so the above lower bound is better than the known one $r_n$
in Lemma~\ref{lem1.1}.

\end{document}